\author{%
  Julien Weibel\footnote{Institut Denis Poisson,
Universit\'{e} d'Orl\'{e}ans,
Universit\'{e} de Tours,
CNRS,
France
and
CERMICS, Ecole des Ponts, France.
    \textrm{\textbf{julien.weibel@normalesup.org}}}
}
\date{March 4, 2026}
\title{Ergodic theorem for branching Markov chains indexed by trees with arbitrary shape\footnote{This is the author accepted manuscript version. The final published version is available in the 
Journal of Applied Probability (2025), 62(3):1089--1104, \href{https://doi.org/10.1017/jpr.2025.14}{doi:10.1017/jpr.2025.14}. Copyright Applied Probability Trust for the final published version.}}
\newlength{\myFigWidth}	
\newlength{\myFigSize}		
\theoremstyle{plain}
\newtheorem{Th}{Theorem}[section]
\newtheorem{assump}{Assumption}
\newtheorem{theorem}[Th]{Theorem}
\newtheorem{lemma}[Th]{Lemma}
\newtheorem{proposition}[Th]{Proposition}
\newtheorem{definition}[Th]{Definition}
\newtheorem{assumption}[assump]{Assumption}
\theoremstyle{definition}
\newtheorem{remark}[Th]{Remark}
\newtheorem{example}[Th]{Example}
\newtheorem*{remark*}{Remark}
\newcommand{\as}{a.s.\ }
\newcommand{\ie}{i.e.\ }
\newcommand{\eg}{e.g.\ }
\newcommand{\scalProd}[2]{\langle #1, #2 \rangle}
\newcommand{\inv}[1]{\frac{1}{#1}}
\newcommand{\cP}{\mathcal{P}}
\newcommand{\dgr}{d}
\newcommand{\T}{T}
\newcommand{\Tinf}{\mathbb{T}^\infty}
\newcommand{\Tthm}{\Tinf}
\newcommand{\Tintro}{\Tinf}
\newcommand{\G}{G}
\newcommand{\Prb}{\mathbb{P}}
\newcommand{\Esp}{\mathbb{E}}
\newcommand{\ind}{\mathds{1}}
\newcommand{\Var}{\text{Var}}
\newcommand{\N}{\mathbb{N}}
\newcommand{\R}{\mathbb{R}}
\newcommand{\drv}{\mathrm{d}}
\newcommand{\norm}[1]{\Vert#1\Vert}
\newcommand{\Ker}{\text{Ker}}
\newcommand{\SpaceX}{\mathcal{X}}
\newcommand{\eps}{\varepsilon}
\newcommand{\rooot}{\partial}
\newcommand{\parent}{\mathrm{p}}
\newcommand{\keywords}[1]{\textbf{Keywords:} #1}
\newcommand{\ams}[2]{\textbf{2020 Mathematics Subject Classification:} Primary #1, Secondary #2}
\begin{document}

\maketitle




\begin{abstract}
We prove an ergodic theorem for Markov chains indexed by the Ulam-Harris-Neveu tree 
	over large subsets with arbitrary shape under two assumptions:
(i) with high probability, two vertices in the large subset are far from each other
	and (ii) with high probability, those two vertices 
	have their common ancestor close to the root.
The assumption on the common ancestor can be replaced by
	some regularity assumption on the Markov transition kernel.
We verify that those assumptions are satisfied for some usual trees.
Finally, with Markov Chain Monte Carlo considerations in mind,
we prove when the underlying Markov chain is stationary and reversible
that the Markov chain, that is the line graph, 
yields minimal variance for the empirical average estimator
among trees with a given number of nodes.
In doing so, we prove that the Hosoya-Wiener polynomial 
is minimized over $[-1,1]$ by the line graph among trees of a given size.
\end{abstract}

\keywords{Tree indexed Markov chain; ergodic theorem; Bienaymé-Galton-Watson trees; minimal variance; Hosoya-Wiener polynomial}

\ams{60J05; 60J80}{60F25}    

\section{Introduction}

Branching Markov processes, 
which are a generalization of Markov chains where processes are indexed by trees,
are useful to describe the evolution and growth of a population.
Limit theorems, such as the law of large numbers (sometimes also called ergodic theorem in markovian contexts),
are important tools to study properties of a population such as the distribution of traits.
The law of large numbers for branching Markov processes has been studied
with both a discrete or continuous state space
\cite{athreyaLimitTheoremsPositive1998,athreyaLimitTheoremsPositive1998a}
for set indicator functions, and thus for continuous bounded functions
by Portmanteau theorem.
To study cellular aging, a more general version of the strong law of large numbers
for a wider class of test functions and for non-independent daughter cells was proved in \cite{GuyonLimitTheorem}.
This wider class of admissible test functions depends on 
the transition kernel of the branching Markov process considered,
\eg \cite{GuyonLimitTheorem} focuses on one example 
with an autoregressive Gaussian transition kernel
and with continuous and polynomially growing functions.
See also \cite{delmasDetectionCellularAging2010} for an extension 
of  \cite{GuyonLimitTheorem}
to bounded degree Bienaymé-Galton-Watson trees
and \cite{bansayeAncestralLineagesLimit2019} for an extension to time-varying environnement and trait-dependent offspring distribution.
In this article, we present an ergodic theorem for a wide class of test functions as in \cite{GuyonLimitTheorem},
and for branching Markov processes where reproduction is independent from individual traits
but where the genealogical tree of the population can have an arbitrary shape.
\medskip

A branching Markov process $X = (X_u, u\in\T)$ with values in a metric space $\SpaceX$
is a random process indexed by a rooted tree $\T$ with the Markov property:
sibling nodes take independent and identically distributed values 
that depend only on the value of their parent node.
Without loss of generality,
we may choose $\T$ to be the rooted Ulam-Harris-Neveu tree $\Tinf = \cup_{n\in\N} (\N^*)^n$.
See Definition~\ref{def:Markov_proc} below for a complete formal definition.

For simplicity, in this introduction we restrict ourselves to the case where the transition kernel $Q$
of the branching Markov process $X$ is 
\emph{ergodic} (resp. \emph{uniformly ergodic}),
that is 
$Q$ has a unique invariant measure $\mu$ and
for any continuous bounded function $f$ on $\SpaceX$, 
we have for all $x\in\SpaceX$ that $\lim_{n\to\infty} \vert Q^n f(x) - \scalProd{\mu}{f} \vert = 0$ 
(resp. $\lim_{n\to\infty} \sup_{x\in\SpaceX} \vert Q^n f(x) - \scalProd{\mu}{f} \vert = 0$).

For a finite (non-empty) subset $A\subset\Tintro$ and some function $f$ on $\SpaceX$,
we define the normalized empirical average:
\[ \bar M_{A}(f) = \vert {A} \vert^{-1} \sum_{u\in {A}} f(X_u) . \]
Our goal is to study the asymptotic behavior of the normalized empirical average
when the averages are performed on a sequence $(A_n)_{n\in\N}$ of finite subsets of $\Tintro$
whose size goes to infinity.
For instance, the averaging set $A_n$ can be 
the $n$-th generation $\G_n$ of a tree $\T$,
	or $\T_n$ the tree $\T$ up to generation $n$.
In the following of the article, 
we will always use this definition for $G_n$ and $T_n$.

To this end, we need a geometrical assumption on the sequence of finite subsets $(A_n)_{n\in\N}$,
which states that vertices are far away from each other with high probability.

\begin{assumption}[Geometrical]
	\label{assump:An_ergodic_theorem}
Let $A_n\subset \Tintro$ for $n\in\N$ be finite (non-empty) subsets,
and let $U_n$ and $V_n$ 
be independent and uniformly sampled elements of $A_n$.
Denoting by $\dgr$ the graph distance on $\Tinf$,
for all $k\in\N$, we have:
\begin{equation*}
\Prb( \dgr(U_n, V_n) \leq k)
= \vert A_n\vert^{-2} \sum_{u,v\in A_n} \ind_{\{ \dgr(u,v)\leq k \}}
\underset{n\to\infty}{\longrightarrow} 0.
\end{equation*}
\end{assumption}
Let us stress that Assumption~\ref{assump:An_ergodic_theorem} 
implies that $\lim_{n\to\infty} \vert A_n \vert = \infty$.

We either need to assume that $Q$ is uniformly ergodic,
or that $Q$ is ergodic and the sequence $(A_n)_{n\in\N}$ satisfies the following
condition stating that the last common ancestor of two vertices is near the root with high probability.
Denote by $h(u)$ the height of a vertex $u$ 
and by $u\land v$ the common ancestor of two vertices $u$ and $v$
(see Section~\ref{subsection:notation}).

\begin{assumption}[Ancestral]
	\label{assump:ancestor_tight}
For all $n\in\N$, let $U_n$ and $V_n$ 
be independent and uniformly sampled elements of $A_n$.

The sequence of random variables $(h(U_n\land V_n))_{n\in\N}$ is tight,
that is, for every $\eps>0$, there exists $k\in\N$ such that
$\Prb(h(U_n\land V_n) > k) < \eps$ for $n$ large enough.
\end{assumption}

Note that Assumptions~\ref{assump:An_ergodic_theorem} and~\ref{assump:ancestor_tight}
are similar to Assumptions 2.(b) and 2.(a), respectively,
considered in \cite{bansayeAncestralLineagesLimit2019}
in the case where $A_n$ is the $n$-th generation of the tree.

\begin{remark}[Some sufficient conditions for Assumptions~\ref{assump:An_ergodic_theorem} and \ref{assump:ancestor_tight}, see Section~\ref{section:sufficient_conditions}]
Assumption~\ref{assump:An_ergodic_theorem} is always satisfied for Cayley and Bethe trees and for bounded degree trees (see Lemma~\ref{lemma:assumpAr2_on_bounded_degree_trees})
as long as $\lim_{n\to\infty} \vert A_n \vert = \infty$.
Assumptions~\ref{assump:An_ergodic_theorem} and \ref{assump:ancestor_tight} are satisfied 
for spherically symmetric trees when $A_n = \G_n$
	(see Lemma~\ref{lemma:sperically_symmetric_trees}).
In Lemma~\ref{lemma:GW_Gn_dist_distrib}, 
	we prove that Assumptions~\ref{assump:An_ergodic_theorem} and \ref{assump:ancestor_tight}
	are satisfied for super-critical Bienaymé-Galton-Watson trees conditioned on non-extinction 
	when $A_n = \G_n$ or $\T_n$. 
\end{remark}

We can now formulate the ergodic theorem for branching Markov processes on trees with arbitrary shape.
In Section~\ref{section:main_theorem}, we prove Theorem~\ref{thm:Ergodic_theorem_on_arbitrary_trees}, a more general version of this theorem.

\begin{theorem}[Ergodic theorem for Markov processes on trees with arbitrary shape]
	\label{thm:intro:Ergodic_theorem_on_arbitrary_trees}
Let $(A_n)_{n\in\N}$ be a sequence of finite subsets of $\Tintro$
	that satisfies Assumption~\ref{assump:An_ergodic_theorem}.
Let $X$ be a branching Markov process indexed by $\Tintro$ with values in $\SpaceX$
	whose transition kernel $Q$ is ergodic.
Assume that either $Q$ is uniformly ergodic or that $(A_n)_{n\in\N}$ satisfies Assumption~\ref{assump:ancestor_tight}.
Then, for every continuous bounded function $f$ on $\SpaceX$, we have: 
\[ \bar M_{A_n}(f) = \vert {A_n} \vert^{-1} \sum_{u\in {A_n}} f(X_u) \overset{L^2}{\underset{n\to\infty}\longrightarrow}  \scalProd{\mu}{f}
. \]
\end{theorem}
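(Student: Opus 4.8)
The plan is to compute the $L^2$ distance between $\bar M_{A_n}(f)$ and $\scalProd{\mu}{f}$ by a second-moment expansion, reducing everything to a sum over pairs of vertices indexed by their common ancestor and their heights. Without loss of generality assume $\scalProd{\mu}{f} = 0$ after replacing $f$ by $f - \scalProd{\mu}{f}$, and normalize so that $\norm{f}_\infty \leq 1$. Since $f$ is bounded, it suffices to bound the second moment
\[
\Esp\bigl[ \bar M_{A_n}(f)^2 \bigr]
= \vert A_n\vert^{-2} \sum_{u,v\in A_n} \Esp[ f(X_u) f(X_v) ],
\]
and show it tends to $0$; combined with convergence of the first moment (which follows from ergodicity of $Q$ applied along the ancestral line of a uniform vertex, once tightness or uniform ergodicity is invoked) this gives $L^2$ convergence. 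The first step is therefore to obtain a clean formula for $\Esp[f(X_u)f(X_v)]$ in terms of the branching Markov structure: conditioning on the value at the common ancestor $w = u\land v$, the two subtree paths from $w$ to $u$ and from $w$ to $v$ evolve independently, so with $a = h(u\land v)$, $p = \dgr(w,u)$, $q = \dgr(w,v)$ one gets
\[
\Esp[ f(X_u) f(X_v) ] = \Esp\bigl[ (Q^p f)(X_w)\,(Q^q f)(X_w) \bigr] = \bigl(Q^a\bigl[ (Q^p f)(Q^q f)\bigr]\bigr)(x_\rooot),
\]
where $x_\rooot$ is the (deterministic) root value.

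The second step is to split the pair sum according to whether $u$ and $v$ are close or far, and whether their common ancestor is high or low. Fix $\eps > 0$. By ergodicity of $Q$, choose $k$ large enough that $\vert Q^m f(x)\vert$ is small for all $m \geq k$ — in the uniformly ergodic case this is uniform in $x$, so $|Q^p f \cdot Q^q f| \leq \eps^2$ pointwise whenever $p,q \geq k$, and hence $|\Esp[f(X_u)f(X_v)]| \leq \eps^2$ for every such pair regardless of the common ancestor; the contribution of pairs with $\dgr(u,v) > 2k$ (which forces $p, q > k$) is then at most $\eps^2$. In the merely ergodic case, one instead conditions on the value $X_w$ at the common ancestor: for pairs with $h(u\land v) \leq \ell$, the law of $X_w$ is $Q^{h(w)}\delta_{x_\rooot}$ with $h(w)\le\ell$, a fixed finite family of measures, so by ergodicity $\sup_{x}$ over the (relatively compact, or at least tight) reachable states of $|Q^p f(x)|$ is small for $p$ large — more precisely, tightness of the ancestor heights (Assumption~\ref{assump:ancestor_tight}) lets us first discard pairs with $h(u\land v) > \ell$ at a cost $\eps$, and on the remaining pairs with $h(u\land v)\le\ell$ the starting value $X_w$ lives in a tight family, so $\Esp[(Q^pf)(X_w)(Q^qf)(X_w)]$ is small once $p,q\ge k$, again discarding the close pairs $\dgr(u,v)\le 2k$ separately. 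In both regimes the number of discarded "close" pairs is $\vert A_n\vert^{-2}\sum_{u,v}\ind_{\{\dgr(u,v)\le 2k\}}$, which $\to 0$ by Assumption~\ref{assump:An_ergodic_theorem}, and the bounded function contributes at most $\norm{f}_\infty^2$ on each such pair; the remaining pairs each contribute at most $O(\eps^2 + \eps)$, and there are at most $\vert A_n\vert^2$ of them. Letting $n\to\infty$ and then $\eps\to 0$ closes the argument.

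I expect the main obstacle to be the merely-ergodic case, where ergodicity gives pointwise but not uniform decay of $Q^m f$, so one cannot bound $\Esp[(Q^pf)(X_w)(Q^qf)(X_w)]$ by a constant times $\eps^2$ without controlling the law of $X_w$. The fix is exactly the role of Assumption~\ref{assump:ancestor_tight}: tightness of $(h(U_n\wedge V_n))_n$ confines the common-ancestor height, hence the law of $X_w$, to a tight family of measures on $\SpaceX$, on which pointwise convergence $Q^mf\to 0$ can be upgraded to the needed smallness via dominated convergence (the integrand is bounded by $\norm{f}_\infty$) — or, phrased without compactness, one uses that for a tight family $\{\nu_i\}$ and $Q^m f\to \scalProd{\mu}{f}$ pointwise with $\|Q^mf\|_\infty\le\|f\|_\infty$, one has $\sup_i |\scalProd{\nu_i}{Q^m f - \scalProd{\mu}{f}}|\to 0$. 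Assembling the two cases into one statement, and making sure the "close pair" cutoff $2k$ is chosen after $k$ (which depends on $\eps$ and, in the ergodic case, on $\ell$), is the bookkeeping that needs care; everything else is the routine second-moment expansion sketched above.
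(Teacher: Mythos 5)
Your overall route is the same as the paper's: center $f$ so that $\scalProd{\mu}{f}=0$, expand the second moment over pairs, write $\Esp[f(X_u)f(X_v)]$ by conditioning at the common ancestor $w=u\land v$, and split the pair sum using Assumption~\ref{assump:An_ergodic_theorem} for close pairs and either uniform ergodicity or Assumption~\ref{assump:ancestor_tight} for the ancestor height. (The separate ``first moment'' step you mention is unnecessary: once $f$ is centered, the second moment \emph{is} the squared $L^2$ distance.) However, one step would fail as literally written: $\dgr(u,v)>2k$ does \emph{not} force both $p=\dgr(u\land v,u)>k$ and $q=\dgr(u\land v,v)>k$. When $v$ is a descendant of $u$ one has $p=0$ and $q=\dgr(u,v)$, so only $\max(p,q)>k$ is guaranteed, and your pointwise bound $|Q^pf\cdot Q^qf|\le\eps^2$ is unavailable for such pairs. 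The repair is the Cauchy--Schwarz step the paper makes explicit in~\eqref{eq_maj_Qmax_ab_f}: $\vert\Esp[(Q^pf)(X_w)(Q^qf)(X_w)]\vert\le \sqrt{C}\,\bigl(\Esp[(Q^{\max(p,q)}f)(X_w)^2]\bigr)^{1/2}$, combined with the observation that $\tilde d(u,v)=\max(p,q)$ satisfies $\dgr/2\le\tilde d\le\dgr$, so that Assumption~\ref{assump:An_ergodic_theorem} transfers to $\tilde d$. With that, only $\max(p,q)$ large is needed, and your estimate degrades harmlessly from $\eps^2$ to $O(\eps)$.

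A second soft spot: the auxiliary claim that for a \emph{tight} family $\{\nu_i\}$ and a uniformly bounded sequence $g_m\to0$ pointwise one has $\sup_i\vert\scalProd{\nu_i}{g_m}\vert\to0$ is false in general (take $\nu_i=\delta_{x_i}$ with all $x_i$ in a compact set and $g_m$ a bump function peaked at $x_m$). Fortunately you do not need it: after discarding ancestors of height $>\ell$ at cost $\eps\norm{f}_\infty^2$ via Assumption~\ref{assump:ancestor_tight}, the law of $X_w$ ranges over the \emph{finite} family $\{\nu Q^a:0\le a\le\ell\}$, and for each fixed $a$ dominated convergence gives $\nu Q^a\bigl((Q^mf)^2\bigr)\to0$; taking a maximum over the finitely many $a\le\ell$ yields the required uniform smallness. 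This is exactly the mechanism of Assumption~\ref{assump:Guyon_simpler}-\ref{assumpGuyonSimpler2} and Lemma~\ref{lemma:assump_2_implies_conv_tech} in the paper. With these two repairs, both of which stay inside your own framework, the proposal is correct and coincides with the paper's proof.
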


\begin{remark}
We discuss 
the main difference between Theorem~\ref{thm:intro:Ergodic_theorem_on_arbitrary_trees} 
and the law of large numbers for branching Markov process found in \cite{GuyonLimitTheorem}.
The results in \cite{GuyonLimitTheorem} apply to Markov processes
where daughter nodes can have non-independent distributions when conditioning on their mother,
whereas in our case they must be independent.
In exchange, our results allow for more flexibility on the shape of the population's genealogical tree:
for instance more flexibility on the number of children of each node (including Bienaymé-Galton-Watson trees with unbounded degree),
or even allowing the number of children of a node to grow over time
	(\eg the degree of the root can increase as $\log n$, \ie slow condensation).
Our results could also be applied to random trees with population-wise interactions
(\eg competition where reproduction rate decreases when population size increases).
Moreover, in our results, the empirical average can be performed on a wide variety of (possibly random) subsets of the tree,
and not only to the $n$-th generation.
As an example, our results could be applied to a random subset of size $\log n$ of the $n$-th generation $\G_n$
of a super-critical Bienaymé-Galton-Watson tree chosen uniformly at random.
Also note that in our results, assumptions can be verified separately for the transition kernel $Q$ and the population's genealogical tree.
\end{remark}

Lastly, motivated by Markov Chain Monte Carlo considerations,
we study in Section~\ref{section:variance} the variance of
the empirical average estimator $\bar M_{A}(f)$ and its dependence on the shape of $A$.
We perform exact variance computation in the case where the transition kernel $Q$
	induces a self-adjoint compact operator on $L^2(\mu)$,
	which proves the following proposition
	about non-asymptotic variance comparison.

\begin{proposition}[The line graph has minimal variance]
	\label{prop:intro:lemma_var_connected_sets}
Let $\mu$ be an invariant measure for $Q$,
and assume that the transition kernel $Q$ induces a self-adjoint compact operator on $L^2(\mu)$.
Let $X$ be a branching Markov process on $\Tinf$ with transition kernel $Q$ and initial distribution $\nu$.
Let $f$ be a non-constant function in $L^2(\mu)$.

When $\nu=\mu$, we have that $\Esp\left[ \bar M_{A}(f) \right] = \scalProd{\mu}{f}$
for any finite subset $A\subset \Tinf$,
and thus the empirical average estimator has no bias.
Moreover, the minimum of the map $A \mapsto \Var ( \bar M_{A}(f) )$
among subtrees of $\Tinf$ with a given cardinal $n$
is achieved by the line graph tree (\ie the Markov chain).

Furthermore, when $n\geq 5$, the line graph is the only subtree of size $n$ achieving this minimum
if and only if $f \not\in \Ker(Q) \oplus \Ker(Q - I) \oplus \Ker (Q + I)$.
\end{proposition}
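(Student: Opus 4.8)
The plan is to reduce the problem to an explicit diagonalization of $Q$ on $L^2(\mu)$ and then to a combinatorial optimization over the shape of $A$. Since $Q$ is self-adjoint and compact on $L^2(\mu)$, write the spectral decomposition: let $(\lambda_i)_{i\geq 0}$ be the eigenvalues with $|\lambda_i|\leq 1$ (here $\lambda_0 = 1$ corresponds to constants, since $\mu Q = \mu$), and let $(\phi_i)_{i\geq 0}$ be a corresponding orthonormal eigenbasis with $\phi_0 \equiv 1$. Decompose $f - \scalProd{\mu}{f} = \sum_{i\geq 1} c_i \phi_i$. The no-bias statement is immediate: when $\nu = \mu$, each $X_u$ has law $\mu$, so $\Esp[f(X_u)] = \scalProd{\mu}{f}$ and linearity gives $\Esp[\bar M_A(f)] = \scalProd{\mu}{f}$.

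For the variance, first I would establish the key covariance formula: for two vertices $u,v$ in the tree with common ancestor $w = u\wedge v$, when $\nu = \mu$ the branching Markov property and reversibility give
\[
\operatorname{Cov}\big(f(X_u), f(X_v)\big) = \sum_{i\geq 1} c_i^2\, \lambda_i^{\,\dgr(u,v)},
\]
because the path from $u$ to $v$ passes through $w$ and has total length $\dgr(u,v) = (h(u)-h(w)) + (h(v)-h(w))$; conditioning on $X_w$ and using $Q$-stationarity collapses the two branches into a single factor $\lambda_i^{\dgr(u,v)}$ (this uses that $\mu$ is reversible so the "up" and "down" segments combine correctly). Hence
\[
\Var\big(\bar M_A(f)\big) = |A|^{-2} \sum_{i\geq 1} c_i^2 \sum_{u,v\in A} \lambda_i^{\,\dgr(u,v)}
= |A|^{-2} \sum_{i\geq 1} c_i^2\, S_A(\lambda_i),
\]
where $S_A(\lambda) = \sum_{u,v\in A}\lambda^{\dgr(u,v)}$ depends only on the distance multiset of the subtree $A$. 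So it suffices to show that, over all subtrees of $\Tinf$ with $n$ vertices, the path (line graph) $P_n$ simultaneously minimizes $S_A(\lambda)$ for every $\lambda\in[-1,1]$, and to identify the equality cases.

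The combinatorial heart is therefore: among all trees on $n$ vertices, the path minimizes $\sum_{u,v}\lambda^{\dgr(u,v)}$ for all $\lambda \in [-1,1]$ simultaneously. I would prove this via the distance distribution. Let $p_A(k) = \#\{(u,v)\in A^2 : \dgr(u,v) = k\}$; then $S_A(\lambda) = \sum_{k\geq 0} p_A(k)\lambda^k$, and $\sum_k p_A(k) = n^2$ is fixed. The path is the unique tree of diameter $n-1$, and more strongly it stochastically dominates every other $n$-vertex tree in the sense that $\sum_{k\geq m} p_{P_n}(k) \geq \sum_{k\geq m} p_A(k)$ for all $m$ — intuitively, any tree other than the path can be transformed toward the path by a sequence of "stretching" moves (e.g. Kelmans-type operations: detaching a branch at a non-leaf and reattaching it at an endpoint) each of which only increases pairwise distances. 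For $\lambda \in [0,1]$, $\lambda^k$ is decreasing in $k$, so shifting mass to larger $k$ decreases $S_A(\lambda)$; for $\lambda\in[-1,0]$ one pairs consecutive terms and argues similarly, or uses $S_A(\lambda) = \sum_k p_A(k)\lambda^k$ together with the fact that the generating polynomial of the path dominates termwise in the appropriate partial-sum order. I expect this stochastic-domination step — proving that every $n$-vertex tree's distance distribution is dominated by the path's, via a clean sequence of distance-monotone tree operations — to be the main obstacle; it is a statement purely about trees but requires care to ensure each move stays within the class of trees and to handle all configurations.

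Finally, for the equality case when $n\geq 5$: from $\Var(\bar M_A(f)) = n^{-2}\sum_{i\geq 1} c_i^2 S_A(\lambda_i)$, strict improvement occurs unless $c_i = 0$ for every eigenvalue $\lambda_i$ for which $S_A(\lambda_i) > S_{P_n}(\lambda_i)$. The stretching analysis shows that for any non-path tree $A$ there is strict domination $S_A(\lambda) > S_{P_n}(\lambda)$ for all $\lambda\in(-1,0)\cup(0,1)$ (the inequality can only be non-strict at $\lambda\in\{-1,0,1\}$, where $S_A$ is constant in the shape: at $\lambda=1$ it is $n^2$; at $\lambda=0$ it is $n$; at $\lambda=-1$ it is $\sum_k (-1)^k p_A(k)$, which for $n\geq 5$ one checks is also shape-independent among trees — bipartiteness of trees pins it down). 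Hence the line graph is the unique minimizer precisely when $f$ has a nonzero component on some eigenspace with $\lambda_i\notin\{-1,0,1\}$, i.e. $f\notin \Ker(Q)\oplus\Ker(Q-I)\oplus\Ker(Q+I)$; conversely if $f$ lies in that sum then every $n$-vertex subtree gives the same variance and the minimizer is not unique. This yields the stated equivalence.
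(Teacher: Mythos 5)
Your reduction is the same as the paper's: spectral decomposition of the self-adjoint compact operator $Q$, the covariance formula $\Esp_\mu[f(X_u)f(X_v)]=\sum_i c_i^2\lambda_i^{d(u,v)}$, and hence minimization of the Hosoya--Wiener polynomial $S_A(\lambda)=\sum_{u,v\in A}\lambda^{d(u,v)}$ separately for each eigenvalue. The unbiasedness claim and the treatment of $\lambda\in(0,1)$ via distance-increasing ``stretching'' moves are essentially the paper's argument (the paper moves a whole branch from the first branching vertex to the pendant leaf; distances are permuted or strictly increased, which is enough for $\lambda>0$). But there are two genuine gaps. First, and most importantly, your strategy collapses for $\lambda\in(-1,0)$. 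Tail-sum (stochastic) dominance of the distance distribution by the path, even if proved, controls $\sum_k p_A(k)\lambda^k$ only when $\lambda^k$ is monotone in $k$: Abel summation gives increments $\lambda^{m-1}(\lambda-1)$, which alternate in sign for negative $\lambda$, so dominance of the tail sums says nothing. Likewise a move that increases a distance $d$ by $1$ can \emph{increase} $\lambda^{d}$ when $\lambda<0$ (sign flip), so distance-monotone operations are not variance-monotone. ``Pairing consecutive terms'' requires controlling $p(2j)+\lambda p(2j+1)$ termwise, which the distance distribution alone does not give. The paper handles $\alpha\in(-1,0)$ by a separate induction on $n$ using the edge-cut recursion $H_T=H_{T_u}+H_{T_v}+2\alpha C_{T_u}(\alpha)C_{T_v}(\alpha)$ with the signed generating functions $C_{T_v}(\alpha)=\sum_{v'}\alpha^{d(v,v')}$, split into several cases according to the sign of $C_{T_v}(\alpha)$; this is the bulk of the proof and is absent from your proposal.

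Second, your claim that $S_A(-1)$ is shape-independent among trees is false: $S_T(-1)=(|B|-|R|)^2$, the squared imbalance of the bipartition (compare the star, which gives $(n-2)^2$, with the path, which gives $0$ or $1$). This matters for the uniqueness statement. The correct reason non-uniqueness occurs when $f$ has a component in $\Ker(Q+I)$ is not that all trees give the same variance, but that for $n\geq 5$ the minimum of $(|B|-|R|)^2$ is attained by the path \emph{and} by other trees with a balanced bipartition (e.g.\ the double-cherry for $n=6$). Your final characterization is the right one, but the argument supporting the ``only if'' direction rests on an incorrect premise and needs to be replaced by this observation.
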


Remark that as $Q$ is a Markov kernel, its spectrum as an $L^2(\mu)$-operator is a subset of $[-1,1]$.
Note that when $f \in \Ker(Q) \oplus \Ker(Q - I)$, then the value of $\Var ( \bar M_{A}(f) )$ does not depend
on the shape of the tree $A$.
Also note that when $f\in \Ker (Q + I)$, then the value of $\Var ( \bar M_{A}(f) )$ is minimal among subtrees of size $n$
when $A$ has a balanced bipartite $2$-coloring, and for $n\geq 5$, the line graph is not the only tree with a balance bipartite $2$-coloring.
\medskip

Hence, if we want to approximate $\scalProd{\mu}{f}$,
using a branching Markov chain does not improve the rate of convergence
compared to a standard Markov chain.
\medskip

The proof of Proposition~\ref{prop:intro:lemma_var_connected_sets}
relies on decomposing the function $f$ on a basis of eigenvectors of $Q$,
the problem then reduces to minimization among trees of a given size
of the Hosoya-Wiener polynomial
$H_A(\alpha) = \sum_{u,v\in A} \alpha^{d(u,v)}$ for some $\alpha\in [-1,1]$.
This leads us to prove the following lemma
stating that the line graph tree achieves this minimum.

\begin{lemma}[The line graph minimizes the Hosoya-Wiener polynomial]
\label{lemma_minimization_Hosoya_polynomial}
Let $\alpha\in [-1,1]$ and $n\in\N^*$.
Then, the minimum of the map $A \mapsto H_A(\alpha)$
among trees with given cardinal $n$ is achieved by the line graph tree.

Furthermore, when $\alpha\in (-1,0) \cup (0,1)$,
the line graph tree of size $n$ is the only tree achieving this minimum.
\end{lemma}

Note that this result was already proved for $\alpha\in[0,1]$
in \cite[Theorem~9]{casablancaDistanceEccentricSequences2019}
where the proof relies heavily on the monotonicity of the function $d \mapsto \alpha^d$
(and more general results were proved in
\cite[Theorem~2]{tianSharpBoundsNormalization2013} and
\cite[Corollary~2.8]{wagnerDistancebasedGraphInvariants2013}
for Wiener-type indices of the form $W_f(A) = \sum_{u,v\in A} f(d(u,v))$
where $f$ is a monotonic function).
Hence, the novelty of Lemma~\ref{lemma_minimization_Hosoya_polynomial}
is for the case of $\alpha\in[-1,0)$
where the function $d\mapsto \alpha^d$ is non-monotonic,
and our proof relies on considering several cases depending on the tree structure.

\section{Main theorem}\label{section:main_theorem}

\subsection{Notations}\label{subsection:notation}

Let $\Tinf = \cup_{n\in\N} (\N^*)^n$ denote the Ulam-Harris-Neveu tree,
and denote by $\rooot$ its root, that is the empty word.

Let $u\in\Tinf$ be a vertex.
If $u$ is distinct from the root, we denote by $\parent(u)$ its parent vertex.
We denote by $h(u)$ its height, \ie the number of edges separating $u$ from the root $\rooot$.
(The height of the root $\rooot$ is zero.)
For two vertices $u,v\in\Tthm$, we denote by $u\land v$ the latest common ancestor
	of $u$ and $v$,
	and by $\dgr(u,v)$ the graph-distance between $u$ and $v$ in $\Tthm$,
	that is $\dgr(u,v) = h(u) + h(v) - 2 h(u \land v)$.

Let $X = (X_u, u\in\Tthm)$ be a stochastic process with values in a metric space $\SpaceX$.
\begin{definition}[Markov process]
	\label{def:Markov_proc}
The stochastic process $X$
	is called a \emph{(branching) Markov process}
	with transition kernel $Q$ and initial distribution $\nu$
	if for any finite subtree $\T\subset \Tthm$ with $\rooot\in T$, we have:
\begin{equation*}
	\Prb \left( \bigcap_{u\in T} \{ X_u \in \drv x_u \} \right)
	 = \nu(\drv x_\rooot)\,  \prod_{u\in T\setminus\{\rooot\}} 
			Q(x_{\parent(u)}; \drv x_u)	. 
\end{equation*}
\end{definition}

We denote by $\nu Q^n$ the distribution of a vertex in the $n$-th generation.
For a (Borel) function $f$, define the function $Q f : x\in\SpaceX \mapsto \int f(y) \, Q(x;\drv y)$
when the expression makes sens.
For a measure $\mu$ and a Borel function $f$ on $\SpaceX$, we denote
	$\mu f = \scalProd{\mu}{f} = \int_{\SpaceX} f\, \drv \mu$.
Through the rest of this section, we fix $\nu$ and $Q$.

\subsection{Statement of the main result}

Firstly, we need some assumptions on the Borel function $f$ with which 
we perform the empirical averages.

\begin{assumption}[Boundedness and convergence]
	\label{assump:Guyon_simpler}
Let $f$ be a Borel function on $\SpaceX$ such that:
\begin{enumerate}[label=(\roman*)]
\item\label{assumpGuyonSimpler1}
	$\sup_{n\in\N} \nu Q^n (f^2) < \infty$,
\item\label{assumpGuyonSimpler2}
	there exists a constant $c_f\in\R$ such that
	$\lim_{n\to\infty} \nu Q^k ((Q^n f - c_f)^2) = 0$
	for all $k\in\N$. 
\end{enumerate}
\end{assumption}
Note that if $f$ satisfies Assumption~\ref{assump:Guyon_simpler}, then so does $f - c$ for any $c\in\R$,
thus we may assume that $c_f=0$ when necessary.
Also note, using Cauchy-Schwarz and Jensen's inequalities, 
that Assumption~\ref{assump:Guyon_simpler}-\ref{assumpGuyonSimpler1} implies that
$Q^n f$, $Q^n f^2$, and $Q^k (Q^n f \times Q^m f)$ (with $n,m,k\in\N$) 
are well-defined and finite $\nu$-almost everywhere and are $\nu$-integrable.

Remark that when $Q$ is ergodic, then Assumption~\ref{assump:Guyon_simpler} is satisfied
by any continuous bounded function $f$ on $\SpaceX$, 
and we get $c_f = \scalProd{\mu}{f}$, where $\mu$ is the unique invariant measure of $Q$.
Also remark that if $F$ is a subspace of Borel functions on $\SpaceX$ that satisfy
Assumptions~(i)-(vi) on pages 11-12 in \cite{GuyonLimitTheorem},
then any function $f\in F$ satisfies Assumption~\ref{assump:Guyon_simpler} with $c_f = \scalProd{\mu}{f}$.
\medskip

For a finite subset $A\subset \Tthm$ and a Borel function $f$,
we define the empirical sum $M_A(f) = \sum_{u\in A} f(X_u)$
	and the empirical average: 
\[ \bar M_A(f) = \vert A \vert^{-1} \sum_{u\in A} f(X_u) , \]
	where $\vert A\vert$ is the cardinal of the set $A$.
Let $(A_n)_{n\in\N}$ be a sequence of finite subsets of $\Tthm$
on which we perform the empirical averages in the ergodic theorem on trees with arbitrary shape.
Remind the geometrical Assumptions~\ref{assump:An_ergodic_theorem} and \ref{assump:ancestor_tight}.

Contrary to the case of the binary tree considered in \cite{GuyonLimitTheorem},
Assumption~\ref{assump:ancestor_tight} is not always satisfied for a sequence $(A_n)_{n\in\N}$
of finite subsets of $\Tthm$ with arbitrary shape
(\eg in the case of the line graph, \ie the Markov chain).
Thus, to prove the ergodic theorem for branching Markov chains,
as an alternative to the ancestral Assumption~\ref{assump:ancestor_tight},
we also consider the following conditions on the ergodicity of the transition kernel $Q$.

\begin{assumption}[Stronger ergodicity]
	\label{assump:Q_stronger_ergodic}
Assume that any of the following conditions holds:
\begin{enumerate}[label=(\roman*)]
\item\label{assump_nu_mu}
$\nu = \mu$ is an invariant measure of $Q$.
\item\label{assump_conv_TV}
There is convergence in total variation 
	$\lim_{n\to\infty} \norm{\nu Q^n - \mu}_{\text{TV}} = 0$
	to some invariant measure $\mu$ (for $Q$),
	the function $f$ is bounded,
	and we have $\lim_{n\to\infty} \mu (Q^n f -c_f)^2 = 0$,
	where $c_f$ is the same constant as in Assumption~\ref{assump:Guyon_simpler}-\ref{assumpGuyonSimpler2}.
\item\label{assump_uniformly_ergodic}
The transition kernel $Q$ satisfies a uniformly ergodic assumption (with $\mu$ as its unique invariant measure):
	there exists a non-negative Borel function $g$ on $\SpaceX$ with $\sup_{k\in\N} \nu Q^k g^2 < \infty$
	and a sequence of positive numbers $(a_n)_{n\in\N}$ that converges to zero,
	such that for all $n\in\N$, we have
		$ \vert Q^n f - \scalProd{\mu}{f} \vert \leq a_n\ g$.
\end{enumerate}
\end{assumption}
Remark (using dominated convergence with domination by $g$) that Assumption~\ref{assump:Q_stronger_ergodic}-\ref{assump_uniformly_ergodic}
implies Assumption~\ref{assump:Guyon_simpler}-\ref{assumpGuyonSimpler2} with $c_f = \scalProd{\mu}{f}$.
Also remark that when Assumption~\ref{assump:Guyon_simpler} holds and either
	Assumption~\ref{assump:Q_stronger_ergodic}-\ref{assump_nu_mu} or \ref{assump:Q_stronger_ergodic}-\ref{assump_conv_TV} holds,
	then we have $c_f = \scalProd{\mu}{f}$ in Assumption~\ref{assump:Guyon_simpler}
(indeed, 
using Jensen's inequality, we have that $(\scalProd{\mu}{f} -c_f)^2 = \limsup_{n\to\infty} (\mu Q^n f - c_f)^2
	\leq \limsup_{n\to\infty} \mu (Q^n f - c_f)^2 =0$).
\medskip

We can now formulate the ergodic theorem for branching Markov processes on trees with arbitrary shape.

\begin{theorem}[Ergodic theorem for Markov processes on trees with arbitrary shape]
	\label{thm:Ergodic_theorem_on_arbitrary_trees}
Let $(A_n)_{n\in\N}$ be a sequence of finite subsets of $\Tthm$
	that satisfies Assumption~\ref{assump:An_ergodic_theorem}.
Let $X$ be a branching Markov process on $\Tthm$
	with transition kernel $Q$ and initial distribution $\nu$.
Let $f$ be a Borel function on $\SpaceX$ that satisfies Assumption~\ref{assump:Guyon_simpler}. 
Furthermore assume that either Assumption~\ref{assump:ancestor_tight} 
	or \ref{assump:Q_stronger_ergodic} holds.
Then, we have: 
\[ \bar M_{A_n}(f) 
= \vert {A_n} \vert^{-1} \sum_{u\in {A_n}} f(X_u) \overset{L^2(\nu)}{\underset{n\to\infty}\longrightarrow}  
c_f 
. \]
\end{theorem}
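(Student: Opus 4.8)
The plan is to upgrade the statement to its $L^2(\nu)$ form by a direct second-moment estimate. Since $f-c_f$ still satisfies Assumption~\ref{assump:Guyon_simpler} (now with constant $0$), and since subtracting a constant affects neither Assumption~\ref{assump:ancestor_tight} nor Assumption~\ref{assump:Q_stronger_ergodic}, we may assume $c_f=0$ (recall that under Assumptions~\ref{assump:ancestor_tight} or~\ref{assump:Q_stronger_ergodic} one has $c_f=\scalProd{\mu}{f}$ when $\mu$ exists). As $\bar M_{A_n}(f)-c_f=\bar M_{A_n}(f-c_f)$, it then suffices to prove that $\Esp\big[\bar M_{A_n}(f)^2\big]\to 0$. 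Expanding the square,
\[
\Esp\big[\bar M_{A_n}(f)^2\big]=\vert A_n\vert^{-2}\sum_{u,v\in A_n}\Esp\big[f(X_u)f(X_v)\big],
\]
so everything reduces to controlling the pairwise correlations $\Esp[f(X_u)f(X_v)]$.

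First I would record the covariance identity. For $u,v\in\Tthm$ write $\ell=h(u\land v)$; conditioning on $X_{u\land v}$ and using the branching Markov property — the two sub-families dangling from $u\land v$ towards $u$ and towards $v$ are conditionally independent, and the formula degenerates correctly when $u\land v\in\{u,v\}$ — one gets
\[
\Esp\big[f(X_u)f(X_v)\big]=\nu Q^{\ell}\big(Q^{h(u)-\ell}f\cdot Q^{h(v)-\ell}f\big),
\]
which is well defined and finite by Assumption~\ref{assump:Guyon_simpler}-\ref{assumpGuyonSimpler1}. Two consequences follow from the Cauchy--Schwarz and Jensen inequalities: the \emph{uniform bound} $\big\vert\Esp[f(X_u)f(X_v)]\big\vert\le C:=\sup_{n}\nu Q^n(f^2)<\infty$, valid for every pair; and, writing $d(u,v)=(h(u)-\ell)+(h(v)-\ell)$ and assuming without loss of generality $h(u)-\ell\ge h(v)-\ell$ (so that $d(u,v)>k$ forces $h(u)-\ell>k/2$), together with $\nu Q^{\ell}\big((Q^{h(v)-\ell}f)^2\big)\le\nu Q^{h(v)}(f^2)\le C$,
\[
\big\vert\Esp[f(X_u)f(X_v)]\big\vert\ \le\ \sqrt{C}\,\Big(\nu Q^{\ell}\big((Q^{h(u)-\ell}f)^2\big)\Big)^{1/2}.
\]
Thus the whole problem is to make this right-hand factor small for pairs that are far apart.

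The key step is a \emph{smallness estimate}: there is a function $\delta$ with $\delta(k)\to0$ as $k\to\infty$ (and, in the ancestral case only, an integer $m=m(\eps)$) such that $\vert\Esp[f(X_u)f(X_v)]\vert\le\delta(k)$ for every pair with $d(u,v)>k$ (and $h(u\land v)\le m$, in the ancestral case). Under Assumption~\ref{assump:ancestor_tight} this is immediate from Assumption~\ref{assump:Guyon_simpler}-\ref{assumpGuyonSimpler2}: with $c_f=0$ one has $\nu Q^i\big((Q^jf)^2\big)\to0$ as $j\to\infty$ for each fixed $i$, hence $\delta(k):=\sqrt{C}\,\big(\max_{0\le i\le m}\sup_{j>k/2}\nu Q^i((Q^jf)^2)\big)^{1/2}\to0$. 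Under Assumption~\ref{assump:Q_stronger_ergodic} one instead obtains a bound \emph{uniform in $\ell$}, so $m$ is not needed: in case~\ref{assump_nu_mu} we have $\nu Q^{\ell}=\mu$ and $j\mapsto\mu((Q^jf)^2)$ is nonincreasing with limit $0$; in case~\ref{assump_uniformly_ergodic}, $(Q^jf)^2\le a_j^2 g^2$ gives $\nu Q^{\ell}((Q^jf)^2)\le a_j^2\sup_k\nu Q^kg^2$ with $a_j\to0$; the only delicate subcase is~\ref{assump_conv_TV}, where $\nu Q^{\ell}$ is not controlled for small $\ell$. There I would split $Q^{h(u)-\ell}f=Q^{p}(Q^{q}f)$ with $p,q\approx\tfrac12(h(u)-\ell)$, apply Jensen to get $\nu Q^{\ell}((Q^{h(u)-\ell}f)^2)\le\nu Q^{\ell+p}((Q^qf)^2)\le\mu((Q^qf)^2)+\norm{f}_\infty^2\,\normTV{\nu Q^{\ell+p}-\mu}$, and use that $p$ and $q$ both grow with $k$ while $\mu((Q^nf)^2)\to0$ and $\normTV{\nu Q^n-\mu}\to0$. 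This rearrangement, which "creates room" for the ergodic decay when the common ancestor is close to the root, is the step I expect to be the main obstacle.

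Finally I would assemble the bound. Fix $\eps>0$ and, in the ancestral case, pick $m$ with $\Prb(h(U_n\land V_n)>m)<\eps$ for $n$ large. Partitioning $A_n\times A_n$ into the pairs with $d(u,v)\le k$, the pairs with $h(u\land v)>m$ (an empty class under Assumption~\ref{assump:Q_stronger_ergodic}), and the remaining pairs, and bounding the first two classes by $C$ and the last by $\delta(k)$ yields
\[
\Esp\big[\bar M_{A_n}(f)^2\big]\ \le\ C\,\vert A_n\vert^{-2}\sum_{u,v\in A_n}\ind_{\{d(u,v)\le k\}}\ +\ C\,\Prb\big(h(U_n\land V_n)>m\big)\ +\ \delta(k).
\]
Letting $n\to\infty$, Assumption~\ref{assump:An_ergodic_theorem} kills the first term and the second is at most $C\eps+o(1)$, so $\limsup_n\Esp[\bar M_{A_n}(f)^2]\le C\eps+\delta(k)$; letting $k\to\infty$ and then $\eps\to0$ gives $\Esp[\bar M_{A_n}(f)^2]\to0$, \ie $\bar M_{A_n}(f)\to c_f$ in $L^2(\nu)$.
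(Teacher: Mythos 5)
Your argument is correct and follows essentially the same route as the paper's: expand the second moment, use the identity $\Esp[f(X_u)f(X_v)]=\nu Q^{h(u\land v)}\bigl(Q^{h(u)-h(u\land v)}f\cdot Q^{h(v)-h(u\land v)}f\bigr)$, then Cauchy--Schwarz and Jensen to reduce everything to the smallness of $\nu Q^{h(u\land v)}\bigl((Q^{\max}f)^2\bigr)$ for distant pairs, split according to $d(U_n,V_n)\le k$ and $h(U_n\land V_n)>m$ exactly as in Lemma~\ref{lemma:assump_2_implies_conv_tech}. Your treatment of Assumption~\ref{assump:Q_stronger_ergodic} (in particular the $Q^{p}(Q^{q}f)$ splitting for case~\ref{assump_conv_TV}, which works because $n\mapsto\normTV{\nu Q^{n}-\mu}$ is nonincreasing) supplies the details of Lemma~\ref{lemma:assump_ergo_implies_technical}, whose proof the paper leaves to the reader.
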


In particular, when the transition kernel $Q$ is ergodic and $f$ is a continuous bounded function,
then remind that Assumption~\ref{assump:Guyon_simpler} is satisfied and $c_f = \scalProd{\mu}{f}$,
where $\mu$ is the unique invariant measure of $Q$.
If furthermore $Q$ is uniformly ergodic, then 
Assumption~\ref{assump:Q_stronger_ergodic}-\ref{assump_uniformly_ergodic} holds.
Hence, Theorem~\ref{thm:Ergodic_theorem_on_arbitrary_trees} implies Theorem~\ref{thm:intro:Ergodic_theorem_on_arbitrary_trees}.

\begin{proof}
Up to replacing $f$ by $f-c_f$, assume that $c_f=0$.
For all $n\in\N$, we have:
\begin{equation}\label{eq_Esp_M_A_f}
\Esp\bigl[ \bar M_{A_n}(f)^2 \bigr] 
= \vert A_n \vert^{-2} \sum_{u,v\in A_n}  \Esp[ f(X_u) f(X_v) ].
\end{equation}
Remark that for $u,v\in\Tthm$, we have:
\begin{equation}\label{eq_Esp_fXu_fXv}
\Esp[ f(X_u) f(X_v) ]
= \nu Q^{h(u\land v)} \left( Q^{d(u\land v,u)} f \times Q^{d(u\land v,v)} f \right) .
\end{equation}

Set $C = \sup_{n\in\N} \nu Q^n f^2 < \infty$ which is finite
by Assumption~\ref{assump:Guyon_simpler}-\ref{assumpGuyonSimpler1}.
Hence, using Cauchy-Schwarz and Jensen's inequalities,
for all $k,\ell,m\in\N$, we have:
\begin{align}
\vert \nu Q^k ( Q^\ell f \times Q^m f ) \vert
& \leq \Bigl( \nu Q^{k} (Q^{\min(\ell,m)} f)^2 \times \nu Q^{k} (Q^{\max(\ell,m)} f)^2
	\Bigr)^{1/2} 	\nonumber\\
& \leq \Bigl( \nu Q^{k+\min(\ell,m)} f^2 \times \nu Q^{k} (Q^{\max(\ell,m)} f)^2
	\Bigr)^{1/2} 	\nonumber\\
& \leq \sqrt{ C } \times \sqrt{ \nu Q^k (Q^{\max(\ell,m)} f)^2 } .
	 \label{eq_maj_Qmax_ab_f}
\end{align}

Define the distance $\tilde d$ on $\Tthm$ as:
\begin{equation*}
\tilde{d}(u,v) = \max(d(u,u\land v), d(v,u\land v))= \max(h(u),h(v))-h(u\land v) .
\end{equation*}
Remark that we have $d/2 \leq \tilde d \leq d$, thus Assumption~\ref{assump:An_ergodic_theorem}
is equivalent to: for all $k\in\N$, we have $\lim_{n\to\infty} \Prb(\tilde d(U_n, V_n) \leq k) = 0$.
Then, as a consequence of \eqref{eq_Esp_M_A_f}, \eqref{eq_Esp_fXu_fXv} and \eqref{eq_maj_Qmax_ab_f},
we get:
\begin{align*}
\Esp\left[ \bar M_{A_n}(f)^2 \right] 
& \leq \sqrt{C}\times  \vert A_n \vert^{-2} 
	\sum_{u,v\in A_n} \left( \nu Q^{h(u\land v)} \left( Q^{\tilde{d}(u,v)} f\right)^2 \right)^{1/2} \\
& \leq \sqrt{C} \times \left(
	\vert A_n \vert^{-2}
		\sum_{u,v\in A_n} \nu Q^{h(u\land v)} \left( Q^{\tilde{d}(u,v)} f \right)^2
	\right)^{1/2},
\end{align*}
where we used Jensen's inequality in the last inequality.
Hence, to conclude the proof it is enough to prove that the following holds:
\begin{equation}\label{eq_thm_conv_technical}
\Esp\left[ \nu Q^{h(U_n \land V_n)} \left(Q^{\tilde{d}(U_n,V_n)} f\right)^2 \right]
= \vert A_n \vert^{-2} \sum_{u,v\in A_n} 
	\nu Q^{h(u\land v)} \left(Q^{\tilde{d}(u,v)} f\right)^2
\underset{n\to\infty}{\longrightarrow} 0.
\end{equation}
As Assumption~\ref{assump:ancestor_tight} (resp. Assumption~\ref{assump:Q_stronger_ergodic}) holds,
using Lemma~\ref{lemma:assump_2_implies_conv_tech} (resp. Lemma~\ref{lemma:assump_ergo_implies_technical}) below,
we get that \eqref{eq_thm_conv_technical} holds,
which concludes the proof.
\end{proof}

Remind that from Assumption~\ref{assump:Guyon_simpler}, we know that $\lim_{n\to\infty} \nu Q^k (Q^n f)^2 = 0$ for all $k\in\N$,
and that \eqref{eq_thm_conv_technical} adds some uniformity in $k$.
In the next lemma, we check that the ancestral Assumption~\ref{assump:ancestor_tight},
which allows for small values $h(U_n\land V_n)$ with high probability,
implies \eqref{eq_thm_conv_technical}.

\begin{lemma}
	\label{lemma:assump_2_implies_conv_tech}
Let $(A_n)_{n\in\N}$ be a sequence of finite subsets of $\Tthm$ that satisfies 
	Assumption~\ref{assump:An_ergodic_theorem}.
Let $f$ be a function on $\SpaceX$ that satisfies Assumption~\ref{assump:Guyon_simpler}.
Then, Assumption~\ref{assump:ancestor_tight} implies \eqref{eq_thm_conv_technical}.
\end{lemma}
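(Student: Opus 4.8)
The plan is to establish \eqref{eq_thm_conv_technical} by a standard splitting argument, cutting the expectation according to whether the last common ancestor $U_n \land V_n$ lies close to the root and whether $U_n$ and $V_n$ are far apart in $\Tthm$.

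First I would reduce to the case $c_f = 0$ by replacing $f$ with $f - c_f$, as in the proof of Theorem~\ref{thm:Ergodic_theorem_on_arbitrary_trees}; this does not affect Assumption~\ref{assump:Guyon_simpler}. I would then record, with $C := \sup_{j\in\N} \nu Q^j f^2 < \infty$ (finite by Assumption~\ref{assump:Guyon_simpler}-\ref{assumpGuyonSimpler1}), the uniform bound $\nu Q^k\bigl((Q^m f)^2\bigr) \leq \nu Q^{k+m}(f^2) \leq C$ for all $k,m\in\N$, which follows from Jensen's inequality exactly as in the derivation of \eqref{eq_maj_Qmax_ab_f}. In particular, every summand $\nu Q^{h(u\land v)}\bigl((Q^{\tilde d(u,v)} f)^2\bigr)$ appearing in \eqref{eq_thm_conv_technical} is at most $C$.

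Next, given $\eps > 0$, I would line up three thresholds in the correct dependency order. First, the tightness Assumption~\ref{assump:ancestor_tight} provides $K\in\N$ with $\Prb(h(U_n\land V_n) > K) < \eps$ for all $n$ large enough. Second, Assumption~\ref{assump:Guyon_simpler}-\ref{assumpGuyonSimpler2} (with $c_f = 0$) provides, for each of the finitely many $k\in\{0,\dots,K\}$, an integer $L_k$ such that $\nu Q^k\bigl((Q^m f)^2\bigr) < \eps$ whenever $m \geq L_k$; set $L = \max_{0\leq k\leq K} L_k$. Third, since Assumption~\ref{assump:An_ergodic_theorem} is equivalent to $\lim_{n\to\infty} \Prb(\tilde d(U_n,V_n)\leq L) = 0$, we have $\Prb(\tilde d(U_n,V_n)\leq L) < \eps$ for all $n$ large enough.

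Finally, I would split the expectation in \eqref{eq_thm_conv_technical} over the event $E_n = \{h(U_n\land V_n)\leq K\}\cap\{\tilde d(U_n,V_n)\geq L\}$ and its complement. On $E_n$, writing $k = h(U_n\land V_n)$ and $m = \tilde d(U_n,V_n)$, we have $k\leq K$ and $m\geq L\geq L_k$, hence the integrand is $< \eps$; on $E_n^c$, the integrand is at most $C$, while $\Prb(E_n^c)\leq \Prb(h(U_n\land V_n)>K) + \Prb(\tilde d(U_n,V_n) < L) < 2\eps$ for $n$ large. This gives $\Esp\bigl[\nu Q^{h(U_n\land V_n)}\bigl((Q^{\tilde d(U_n,V_n)}f)^2\bigr)\bigr] \leq (1+2C)\eps$ for $n$ large, and letting $\eps\downarrow 0$ yields \eqref{eq_thm_conv_technical}. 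The argument is essentially routine; the only delicate point — and precisely what makes the ancestral Assumption~\ref{assump:ancestor_tight} the natural hypothesis here — is that the convergence $\nu Q^k\bigl((Q^m f)^2\bigr)\to 0$ from Assumption~\ref{assump:Guyon_simpler} is known only for each fixed $k$ and not uniformly in $k$, so one must first freeze $k$ to the finite range $\{0,\dots,K\}$ using tightness of $h(U_n\land V_n)$ before choosing $L$.
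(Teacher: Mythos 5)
Your proposal is correct and follows essentially the same argument as the paper: reduce to $c_f=0$, bound each summand by $C$ via Jensen, use tightness of $h(U_n\land V_n)$ to restrict to finitely many values of $k$, then choose the threshold on $\tilde d$ uniformly over $k\leq K$ and split the expectation, yielding the same $(1+2C)\eps$ bound. The only difference is cosmetic: you make explicit the maximum over the finitely many $L_k$, which the paper leaves implicit in its choice of $M$.
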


\begin{proof}
Without loss of generality, assume that $c_f = 0$.
Set $C = \sup_{j\in\N} \nu Q^j f^2 < \infty$ which is finite
by Assumption~\ref{assump:Guyon_simpler}-\ref{assumpGuyonSimpler1}.
Thus, for $k, m \in \N$, using Jensen's inequality, we get:
\begin{equation}\label{eq_bound_Qklm_f2}
 \nu Q^k ( Q^m f)^2 
\leq  \nu Q^{k+m} f^2
\leq C 
< \infty.
\end{equation}

Let $\eps>0$.
Using Assumption~\ref{assump:ancestor_tight}, there exists $K\in\N$ such that
	 for $n\in\N$ large enough, 
	we have $\Prb(h(U_n \land V_n) > K) < \eps$.
Using Assumption~\ref{assump:Guyon_simpler}-\ref{assumpGuyonSimpler2},
let $M\in\N$ be such that for all $m\geq M$ and for all $k\leq K$, 
	we have $\nu Q^k (Q^m f)^2 < \eps$.
Using Assumption~\ref{assump:An_ergodic_theorem}, for $n$ large enough 
	we have $\Prb( \tilde d(U_n, V_n) < M) < \eps$.
Hence, using \eqref{eq_bound_Qklm_f2}, for $n$ large enough we get:
\begin{equation*}
\Esp\left[ \nu Q^{h(U_n \land V_n)} \left(Q^{\tilde{d}(U_n,V_n)} f\right)^2 \right]
\leq 2 C \eps 
	+  \max_{k\leq K} \sup_{m\geq M} \nu Q^k (Q^m f)^2
<  (1 + 2C) \eps.
\end{equation*}
This being true for all $\eps>0$,
we get that \eqref{eq_thm_conv_technical} holds, which concludes the proof.
\end{proof}

The following lemma states that the stronger ergodic Assumption~\ref{assump:Q_stronger_ergodic}
implies \eqref{eq_thm_conv_technical};
its proof is similar to the proof of Lemma~\ref{lemma:assump_2_implies_conv_tech} and is left to the reader.

\begin{lemma}\label{lemma:assump_ergo_implies_technical}
Let $(A_n)_{n\in\N}$ be a sequence of finite subsets of $\Tthm$ that satisfies 
	Assumption~\ref{assump:An_ergodic_theorem}.
Let $f$ be a function on $\SpaceX$ that satisfies Assumption~\ref{assump:Guyon_simpler}.
Then, Assumption~\ref{assump:Q_stronger_ergodic} implies \eqref{eq_thm_conv_technical}.
\end{lemma}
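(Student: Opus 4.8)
The plan is to follow the proof of Lemma~\ref{lemma:assump_2_implies_conv_tech} almost verbatim, replacing the sole ingredient that genuinely used the ancestral hypothesis --- tightness of $h(U_n\land V_n)$ --- by an estimate that is uniform in the height of the common ancestor. After reducing to $c_f=0$, I would single out the key claim
\[
\sup_{k\in\N}\,\nu Q^{k}\bigl(Q^{m}f\bigr)^{2}\underset{m\to\infty}{\longrightarrow}0 .
\]
Granting this, the conclusion is immediate. Fix $\eps>0$ and choose $M\in\N$ with $\sup_{k\in\N}\nu Q^{k}(Q^{m}f)^{2}<\eps$ for all $m\ge M$. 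Split $\Esp[\nu Q^{h(U_n\land V_n)}(Q^{\tilde d(U_n,V_n)}f)^{2}]$ according to whether $\tilde d(U_n,V_n)<M$ or $\tilde d(U_n,V_n)\ge M$. On the first event use the crude bound $\nu Q^{h}(Q^{\tilde d}f)^{2}\le\nu Q^{h+\tilde d}f^{2}\le C$ of \eqref{eq_bound_Qklm_f2} (with $C=\sup_{j}\nu Q^{j}f^{2}<\infty$, finite by Assumption~\ref{assump:Guyon_simpler}-\ref{assumpGuyonSimpler1}); on the second event use the choice of $M$. This yields $\Esp[\nu Q^{h(U_n\land V_n)}(Q^{\tilde d(U_n,V_n)}f)^{2}]\le C\,\Prb(\tilde d(U_n,V_n)<M)+\eps$, and the first term tends to $0$ by Assumption~\ref{assump:An_ergodic_theorem} in its $\tilde d$-form; since $\eps$ was arbitrary, \eqref{eq_thm_conv_technical} follows.

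It remains to prove the claim, case by case along Assumption~\ref{assump:Q_stronger_ergodic}. If \ref{assump_nu_mu} holds, then $\nu Q^{k}=\mu$ for every $k$, so the supremum equals $\mu(Q^{m}f)^{2}$, which tends to $0$ by Assumption~\ref{assump:Guyon_simpler}-\ref{assumpGuyonSimpler2} with $k=0$, since $c_f=0$ forces $\scalProd{\mu}{f}=0$. If \ref{assump_uniformly_ergodic} holds, then $|Q^{m}f|=|Q^{m}f-\scalProd{\mu}{f}|\le a_{m}\,g$ (again $\scalProd{\mu}{f}=c_f=0$), hence $\nu Q^{k}(Q^{m}f)^{2}\le a_{m}^{2}\,\nu Q^{k}g^{2}\le a_{m}^{2}\sup_{j}\nu Q^{j}g^{2}$ uniformly in $k$, which vanishes as $a_{m}\to0$. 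If \ref{assump_conv_TV} holds, decompose $\nu Q^{k}(Q^{m}f)^{2}=\mu(Q^{m}f)^{2}+(\nu Q^{k}-\mu)(Q^{m}f)^{2}$: the first term tends to $0$ in $m$ by hypothesis, and, $f$ being bounded, the second is at most $\norm{f}_{\infty}^{2}\,\normTV{\nu Q^{k}-\mu}$. Given $\eps$, pick $K$ with $\normTV{\nu Q^{k}-\mu}<\eps$ for all $k\ge K$; this controls the indices $k\ge K$ uniformly, while the finitely many indices $k<K$ are handled by Assumption~\ref{assump:Guyon_simpler}-\ref{assumpGuyonSimpler2} directly. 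Taking $\limsup_{m\to\infty}$ and then letting $\eps\to0$ gives the claim.

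I expect the only real obstacle to be the total-variation case \ref{assump_conv_TV}: there the control on $\nu Q^{k}$ is not uniform in $k$, so the supremum cannot be bounded in one stroke and must be split into a ``large-$k$'' part, where total-variation convergence applies (boundedness of $f$ letting one pass from test functions to the total-variation distance), and a ``bounded-$k$'' part, where the pointwise statement in Assumption~\ref{assump:Guyon_simpler}-\ref{assumpGuyonSimpler2} suffices because only finitely many indices occur. Everything else --- the two invocations of Jensen's inequality, the passage through $\tilde d$, and the closing $\eps$-argument --- is identical in structure to the proof of Lemma~\ref{lemma:assump_2_implies_conv_tech} and introduces no new idea.
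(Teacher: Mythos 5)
Your proof is correct and is exactly the adaptation the paper intends when it leaves this lemma to the reader: you replace the tightness of $h(U_n\land V_n)$ used in Lemma~\ref{lemma:assump_2_implies_conv_tech} by the uniform-in-$k$ claim $\sup_{k\in\N}\nu Q^{k}(Q^{m}f)^{2}\to 0$, verified separately in each of the three cases of Assumption~\ref{assump:Q_stronger_ergodic}, and then run the same $\eps$-splitting on $\{\tilde d(U_n,V_n)<M\}$. The case analysis (invariance, domination by $a_m g$, and the large-$k$/small-$k$ split under total-variation convergence) is sound.
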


\section{Examples satisfying Assumptions~\ref{assump:An_ergodic_theorem} and \ref{assump:ancestor_tight}}
	\label{section:sufficient_conditions}

We now give common examples of trees for which 
Assumptions~\ref{assump:An_ergodic_theorem} and \ref{assump:ancestor_tight} are satisfied.

We denote by $\T$ an arbitrary infinite tree rooted at some vertex $\rooot$.
In this section, all the trees we consider are locally-finite (\ie all nodes have finite degree).
For $n\in\N$, we denote by $\G_n$ the $n$-th generation of $\T$,
	that is the set of vertices at distance $n$ from the root,
and we denote by $\T_n = \cup_{k=0}^n \G_k$ the tree up to generation $n$.
For a vertex $u\in\T$,
we denote by $\T(u)$ the subtree of $\T$ rooted at $u$
		and composed of all descendants of $u$.

\subsection{Some simple deterministic trees}

Firstly, Assumption~\ref{assump:An_ergodic_theorem} is always satisfied
	on trees with bounded vertex degrees,
	in particular for Cayley and Bethe trees
(that is trees where each non-leaf vertex has out-degree $D$ for some $D\geq 1$;
	except for the root of a  Bethe tree which has out-degree $D+1$).

\begin{lemma}[Bounded degree trees]
	\label{lemma:assumpAr2_on_bounded_degree_trees}
Let $D\geq 2$, and let $\T$ be the infinite rooted complete $D$-ary tree 
(that is the tree where each vertex has $D$ children).
Let $(A_n)_{n\in\N}$ be a sequence of finite (non-empty) subsets of $\T$
	such that $\lim_{n\to\infty} \vert A_n \vert = \infty$.
Then, the sequence $(A_n)_{n\in\N}$ satisfies Assumption~\ref{assump:An_ergodic_theorem}.
\end{lemma}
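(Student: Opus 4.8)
The plan is to bound the number of pairs $(u,v) \in A_n \times A_n$ with $d(u,v) \le k$ and show this is $o(|A_n|^2)$. Fix $k \in \N$. For a fixed vertex $u$, the ball $B(u,k) = \{ v \in \T : d(u,v) \le k\}$ in the complete $D$-ary tree has cardinality bounded by a constant $c_{D,k}$ depending only on $D$ and $k$ (not on $u$): indeed, walking at most $k$ steps from $u$ in a tree where every vertex has degree at most $D+1$ reaches at most $\sum_{j=0}^{k}(D+1)^j =: c_{D,k}$ vertices. Hence
\[
\sum_{u,v \in A_n} \ind_{\{d(u,v)\le k\}}
\le \sum_{u \in A_n} |B(u,k)|
\le c_{D,k}\, |A_n|,
\]
so that $\Prb(d(U_n,V_n)\le k) = |A_n|^{-2}\sum_{u,v\in A_n}\ind_{\{d(u,v)\le k\}} \le c_{D,k}\,|A_n|^{-1}$.

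First I would record the uniform ball-size bound above, which is the only genuinely tree-specific input and follows immediately from local finiteness with a uniform degree bound. Then I would plug it into the double sum and divide by $|A_n|^2$. Since $\lim_{n\to\infty}|A_n| = \infty$ by hypothesis and $c_{D,k}$ is a fixed constant, the right-hand side $c_{D,k}|A_n|^{-1}$ tends to $0$ as $n\to\infty$. As $k$ was arbitrary, this is exactly the statement of Assumption~\ref{assump:An_ergodic_theorem}, and the proof is complete.

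There is essentially no obstacle here: the argument is a one-line counting estimate once one observes that bounded degree forces balls of bounded radius to have bounded size uniformly in the center. The only point to be mildly careful about is the degree count at the root versus at internal vertices (degree $D$ versus $D+1$), but taking the cruder bound $(D+1)^j$ for the number of vertices at distance exactly $j$ absorbs this discrepancy and also covers the Bethe/Cayley variants mentioned in the surrounding text. The same proof clearly works verbatim for any locally finite tree with $\sup_{u} \deg(u) < \infty$, which is why the lemma is stated as the representative case of a complete $D$-ary tree.
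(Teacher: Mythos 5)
Your proof is correct and follows essentially the same route as the paper: bound $\vert B(u,k)\vert$ uniformly by $\sum_{j=0}^{k}(D+1)^j$ and conclude that the double sum is at most $c_{D,k}\vert A_n\vert$, which is $o(\vert A_n\vert^2)$. No issues.
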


\begin{proof}
Let $k\in\N$. For every vertex $u\in\T$, the ball $B_{\T}(u,k)$ of radius $k$ and center $u$
	has cardinal upper bounded by $c_k = \sum_{j=0}^k (D+1)^j$.
Hence, we have:
\begin{equation*}
\inv{\vert A_n\vert^{2}} \sum_{u,v\in A_n} \ind_{\{ \dgr(u,v)\leq k \}}
= \inv{\vert A_n\vert^{2}} \sum_{u\in A_n} \vert B_{\T}(u,k) \vert
	\leq \inv{\vert A_n\vert} c_k \cdot
\end{equation*}
This implies that Assumption~\ref{assump:An_ergodic_theorem} is satisfied.
\end{proof}

The following counter-example shows that Assumption~\ref{assump:ancestor_tight}
is not always satisfied on a bounded degree tree.

\begin{example}
Let $\T$ be the infinite tree where each vertex has out-degree $D\geq 2$.
Set $A_n = \T(u_n) \cap \G_{2n}$ the $n$-th descendants of $u_n$,
where $u_n = 1 \cdots 1$ ($n$ times) is the left-most $n$-th descendant of the root.
Then, we have $\vert A_n \vert = D^n \to_{n\to\infty} \infty$,
and by Lemma~\ref{lemma:assumpAr2_on_bounded_degree_trees},
the sequence $(A_n)_{n\in\N}$ satisfies Assumption~\ref{assump:An_ergodic_theorem}.
However, we have $\Prb(h(U_n \land V_n) \geq n) = 1$ for all $n\in\N$,
which implies that the sequence $(A_n)_{n\in\N}$ does not satisfy Assumption~\ref{assump:ancestor_tight}.
\end{example}

When with high probability, the vertices in $A_n$ are far from the root (\eg when $A_n =\G_n$)
and have their common ancestor close from the root,
then Assumption~\ref{assump:An_ergodic_theorem} is satisfied.

\begin{lemma}[$A_n$ far from the root]
	\label{lemma:assump_An_Gn_2}
Let $(A_n)_{n\in\N}$ be a sequence of finite subsets of $\Tthm$.
For every $n\in\N$, let $U_n$ and $V_n$ be independent and 
		uniformly sampled elements of $A_n$.
Assume that for every $k\in\N$, $\lim_{n\to\infty} \Prb(h(U_n) \leq k) = 0$
	and that Assumption~\ref{assump:ancestor_tight} holds.
Then, the sequence $(A_n)_{n\in\N}$ satisfies Assumption~\ref{assump:An_ergodic_theorem}.
\end{lemma}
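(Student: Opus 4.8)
The plan is to control $\Prb(\dgr(U_n,V_n)\le k)$ by distinguishing whether the last common ancestor $U_n\land V_n$ is close to the root or not, and to use the hypothesis on the heights $h(U_n)$ only in the former case. Recall the identity $\dgr(u,v)=h(u)+h(v)-2h(u\land v)$, so that
\[
h(U_n)\le h(U_n)+h(V_n)=\dgr(U_n,V_n)+2\,h(U_n\land V_n).
\]
Consequently, for any $k,K\in\N$, on the event $\{\dgr(U_n,V_n)\le k\}\cap\{h(U_n\land V_n)\le K\}$ one has $h(U_n)\le k+2K$, hence
\[
\Prb(\dgr(U_n,V_n)\le k)\le \Prb\bigl(h(U_n\land V_n)>K\bigr)+\Prb\bigl(h(U_n)\le k+2K\bigr).
\]

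First I would fix $k\in\N$ and $\eps>0$, and invoke Assumption~\ref{assump:ancestor_tight} (tightness of $(h(U_n\land V_n))_{n\in\N}$) to pick $K\in\N$ such that $\Prb(h(U_n\land V_n)>K)<\eps$ for all $n$ large enough. Then, with $K$ now frozen, the hypothesis $\lim_{n\to\infty}\Prb(h(U_n)\le k+2K)=0$ yields $\Prb(h(U_n)\le k+2K)<\eps$ for $n$ large enough. Plugging both bounds into the displayed inequality gives $\limsup_{n\to\infty}\Prb(\dgr(U_n,V_n)\le k)\le 2\eps$; letting $\eps\downarrow 0$ shows $\Prb(\dgr(U_n,V_n)\le k)\to 0$, and since $k$ was arbitrary this is exactly Assumption~\ref{assump:An_ergodic_theorem}.

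The argument is short and the only delicate point is the order of the quantifiers: the threshold $K$ must be extracted from the tightness assumption \emph{before} passing to the limit in $n$ in the term $\Prb(h(U_n)\le k+2K)$. Equivalently, one could run the same computation with the distance $\tilde d$ from the proof of Theorem~\ref{thm:Ergodic_theorem_on_arbitrary_trees} (using $\tilde d(U_n,V_n)\ge h(U_n)-h(U_n\land V_n)$), which is interchangeable with $\dgr$ via $\dgr/2\le\tilde d\le\dgr$; I do not expect any genuine obstacle beyond this bookkeeping.
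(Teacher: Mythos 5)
Your proof is correct and follows essentially the same route as the paper: both decompose the event $\{\dgr(U_n,V_n)\le k\}$ according to whether $h(U_n\land V_n)$ exceeds a threshold, using tightness for the first piece and the height hypothesis for the second. Your version, by decoupling the two thresholds $k$ and $K$, actually makes the quantifier order more explicit than the paper's (which uses the same $k$ in both terms and asserts that "both terms go to zero"), but the underlying argument is the same.
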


\begin{proof}
Remind that $\dgr(U_n,V_n) = h(U_n) + h(V_n) - 2 h(U_n \land V_n)$.
Thus, for $k\in\N$ we have:
\begin{align*}
\Prb(d(U_n,V_n)\leq 2k)
& \leq 2\, \Prb( h(U_n) - h(U_n \land V_n) \leq k) \\
& \leq 2\, \Prb( h(U_n \land V_n) > k)
	+ 2\, \Prb( h(U_n) \leq 2k ) ,
\end{align*}
where both terms in the upper bound go to zero as $n\to\infty$.
Hence, the sequence $(A_n)_{n\in\N}$ satisfies Assumption~\ref{assump:An_ergodic_theorem}.
\end{proof}

We say a tree $T$ is spherically symmetric (sometimes also called a generalized Bethe tree)
	if for all $n\in\N$, every vertex of height $n$ in $T$ has the the same out-degree $D_n$.
When we choose $A_n = \G_n$ the $n$-th generation for all $n\in\N$,
Assumptions~\ref{assump:An_ergodic_theorem} and~\ref{assump:ancestor_tight} 
are always satisfied on spherically symmetric trees.

\begin{lemma}[Spherically symmetric trees]
	\label{lemma:sperically_symmetric_trees}
Let $\T$ be an infinite spherically symmetric tree such that $\lim_{n\to\infty} \vert \G_n\vert = \infty$.
Then, the sequence $(\G_n)_{n\in\N}$ satisfies Assumptions~\ref{assump:An_ergodic_theorem}
	and~\ref{assump:ancestor_tight}.
\end{lemma}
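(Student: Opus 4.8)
The plan is to reduce everything to a single explicit computation of the law of $h(U_n \land V_n)$ when $U_n, V_n$ are independent uniform picks from $\G_n$, using the spherical symmetry to get a clean product formula. First I would set up notation: let $D_0, D_1, D_2, \dots$ be the out-degrees at successive heights, so $|\G_n| = \prod_{j=0}^{n-1} D_j =: N_n$, and recall $N_n \to \infty$ by hypothesis. The key observation is that by spherical symmetry, if we pick $U_n$ uniformly in $\G_n$ and then, independently, pick $V_n$ uniformly in $\G_n$, then conditionally on $U_n$ the ancestor $U_n \land V_n$ has height $i$ (for $0 \le i \le n$) with probability exactly
\[
\Prb(h(U_n\land V_n) = i \mid U_n) = \Prb(h(U_n\land V_n)=i) = \frac{N_n / N_i}{N_n}\Bigl(1 - \frac{1}{D_i}\Bigr) \ \text{ for } i<n, \qquad \Prb(h(U_n\land V_n)=n) = \frac{1}{N_n},
\]
because $V_n$ agrees with $U_n$ up to height $i$ iff it lies in the size-$(N_n/N_i)$ subtree rooted at the height-$i$ ancestor of $U_n$, and additionally branches off at step $i$ with probability $1 - 1/D_i$ among the $D_i$ children (I would double-check the edge cases $D_i = 1$, where that factor is $0$, consistent with no branching being possible). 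Thus $\Prb(h(U_n \land V_n) = i) = (1/N_i)(1 - 1/D_i) = 1/N_i - 1/N_{i+1}$ for $i < n$, a telescoping expression.

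Next I would prove Assumption~\ref{assump:ancestor_tight} (tightness of $h(U_n\land V_n)$). From the telescoping, $\Prb(h(U_n\land V_n) > k) = \sum_{i=k+1}^{n-1}(1/N_i - 1/N_{i+1}) + 1/N_n = 1/N_{k+1}$ for $k < n$. Since $N_{k+1} \ge N_k \to \infty$, for any $\eps > 0$ we may fix $k$ with $1/N_{k+1} < \eps$, and then $\Prb(h(U_n\land V_n) > k) \le 1/N_{k+1} < \eps$ for all $n > k$; tightness follows. (If some $N_{k+1}$ is still finite but we need all $n$, note $\Prb(\cdot > k) = 1/N_{k+1}$ exactly once $n > k$, and is $0$ once... actually it equals $1/N_{k+1}$; the point is it does not depend on $n$ for $n>k$, so the bound is uniform.)

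Then Assumption~\ref{assump:An_ergodic_theorem} follows by invoking Lemma~\ref{lemma:assump_An_Gn_2}: we have already established Assumption~\ref{assump:ancestor_tight}, so it remains to check that $\Prb(h(U_n) \le k) \to 0$ for every fixed $k$. But $U_n \in \G_n$ deterministically, so $h(U_n) = n$ almost surely, whence $\Prb(h(U_n) \le k) = 0$ for all $n > k$. Lemma~\ref{lemma:assump_An_Gn_2} then gives Assumption~\ref{assump:An_ergodic_theorem}, completing the proof.

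I do not expect a serious obstacle here; the one place to be careful is the combinatorial identity for $\Prb(h(U_n\land V_n)=i)$ — in particular handling heights $i$ with $D_i = 1$ and the boundary term $i = n$ — so that the telescoping sum is airtight. Everything else is bookkeeping with $N_n \to \infty$.
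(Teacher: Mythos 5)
Your proposal is correct and follows essentially the same route as the paper: both exploit the product structure of a uniform vertex of $\G_n$ under spherical symmetry to get $\Prb(h(U_n\land V_n)\geq k)=\prod_{i=0}^{k-1}D_i^{-1}=|\G_k|^{-1}$, deduce tightness from $|\G_k|\to\infty$, and then invoke Lemma~\ref{lemma:assump_An_Gn_2} together with the trivial fact $h(U_n)=n$ to obtain Assumption~\ref{assump:An_ergodic_theorem}. Your extra step of writing out the exact point distribution and telescoping it is harmless but not needed; the paper computes the tail probability directly.
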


\begin{proof}
Thanks to Lemma~\ref{lemma:assump_An_Gn_2}, we only need to prove that 
	Assumption~\ref{assump:ancestor_tight} is true.
For all $n\in\N$, denote by $D_n$ the out-degree for all vertices of height $n$.
As $\lim_{n\to\infty} \vert \G_n\vert = \infty$, we have that $D_n>1$ for infinitely many values of $n$.
Let $U_{n}$ and $V_{n}$ be independent random vertices uniformly distributed over $\G_n$.
Using the Ulam-Harris-Neveu tree notation, 
write $U_{n} = U_{(1)} \cdots U_{(n)}$ and $V_{n} = V_{(1)} \cdots V_{(n)}$,
where the random variables $U_{(1)}, \dots, U_{(n)}$, $V_{(1)}, \dots, V_{(n)}$ are independent
	with $U_{(i)}$ and $V_{(i)}$ uniformly distributed over the set $\{1, \dots, D_{i-1} \}$.
Thus, for all $k\in\N$ and $n\geq k$, as $\dgr(U_{n},V_{n}) = 2n - 2 h(U_{n}\land V_{n})$, we have:
\begin{equation*}  
\Prb\Bigl( h(U_{n} \land V_{n}) \geq k \Bigr)
= \Prb\Bigl( U_{(i)} = V_{(i)}, \forall i\in \{ 1, \dots, k\} \Bigr)
= \prod_{i=0}^{k-1} \inv{D_i},
\end{equation*}
where the right hand side goes to $0$ as $k\to\infty$.
This implies that Assumption~\ref{assump:ancestor_tight} is satisfied,
and thus concludes the proof.
\end{proof}

\subsection{Super-critical Bienaymé-Galton-Watson trees}
	\label{section:GW_trees}

To apply the ergodic Theorem~\ref{thm:Ergodic_theorem_on_arbitrary_trees}
to a random sequence $(A_n)_{n\in\N}$ of subsets of $\Tinf$
(independent of the Markov process indexed by $\Tinf$),
we  need to verify that \as the sequence $(A_n)_{n\in\N}$
	satisfy Assumption~\ref{assump:An_ergodic_theorem}
	(and possibly Assumption~\ref{assump:ancestor_tight}).
Note that in this case, 
	Assumptions~\ref{assump:An_ergodic_theorem} and~\ref{assump:ancestor_tight}
	should be considered conditionally on $A_n$,
	in particular the random vertices $U_n$ and $V_n$ are independent and uniformly distributed over $A_n$
	conditionally on $A_n$.

In this subsection, we consider the case where $\T$
is a super-critical Bienaymé-Galton-Watson tree whose offspring distribution $\cP$ on $\N$
	has finite mean $m>1$ and finite second moment,
	and is conditioned on non-extinction.
The following lemma states that \as both the sequences $(\G_n)_{n\in\N}$ and $(\T_n)_{n\in\N}$ 
	satisfy Assumptions~\ref{assump:An_ergodic_theorem} and \ref{assump:ancestor_tight}.

\begin{lemma}[Super-critical Bienaymé-Galton-Watson trees]
	\label{lemma:GW_Gn_dist_distrib}
Let $\T$ be a super-critical Bienaymé-Galton-Watson tree whose offspring distribution has mean $m>1$ and finite second moment
	and is conditioned on non-extinction.
\begin{enumerate}[label=(\roman*)]
\item\label{item:GW_An_dist_distrib}
Let $(\ell_n)_{n\in\N}$ be a sequence of integers such that $0\leq \ell_n \leq n$ for all $n\in\N$.
For every $n\in\N$,
let $A_n = \cup_{k=(n-\ell_n)_+}^n \G_{k}$
be the subset composed of the last $\ell_n +1$ generations of $\T_n$.
Then, the sequence $(A_n)_{n\in\N}$ \as satisfies Assumptions~\ref{assump:An_ergodic_theorem} and \ref{assump:ancestor_tight}.
\item\label{item:GW_Gn_Tn_dist_distrib}
The sequences $(\G_n)_{n\in\N}$ and $(\T_n)_{n\in\N}$ \as satisfies 
Assumptions~\ref{assump:An_ergodic_theorem} and \ref{assump:ancestor_tight}.
\end{enumerate}
\end{lemma}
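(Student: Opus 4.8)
The plan is to establish Lemma~\ref{lemma:GW_Gn_dist_distrib} by reducing everything to quantitative control of the pair $(h(U_n), h(U_n\land V_n))$ where $U_n,V_n$ are independent uniform vertices of the relevant averaging set, conditionally on the tree $\T$. For part~\ref{item:GW_An_dist_distrib} I would first observe that, since $A_n$ consists of the last $\ell_n+1$ generations of $\T_n$, any vertex of $A_n$ has height at least $n-\ell_n$, so the ``$A_n$ far from the root'' hypothesis of Lemma~\ref{lemma:assump_An_Gn_2} is automatic once $n-\ell_n\to\infty$; the borderline case where $n-\ell_n$ stays bounded (i.e.\ $A_n$ contains a fixed early generation $\G_j$) is negligible because on the event of non-extinction $\vert A_n\vert\to\infty$, so the mass put on any fixed generation tends to $0$. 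Hence by Lemma~\ref{lemma:assump_An_Gn_2} it suffices to verify Assumption~\ref{assump:ancestor_tight} almost surely, i.e.\ tightness of $h(U_n\land V_n)$.

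For tightness of the common-ancestor height I would use the Kesten--Stigum regime: since $\cP$ has mean $m>1$ and finite second moment, $W_k := m^{-k}\vert\G_k\vert$ converges almost surely and in $L^2$ to a limit $W$ which is strictly positive on the non-extinction event. The key estimate is that, conditionally on $\T$, for a vertex $w$ of height $j$ the probability that both $U_n$ and $V_n$ descend from $w$ equals $(\vert A_n\cap \T(w)\vert/\vert A_n\vert)^2$; summing over $w\in\G_j$ and using $\sum_{w\in\G_j}\vert A_n\cap\T(w)\vert=\vert A_n\vert$ together with Cauchy--Schwarz in the wrong direction is not enough, so instead I would bound
\[
\Prb\bigl(h(U_n\land V_n)\ge j\mid \T\bigr)
= \sum_{w\in\G_j}\Bigl(\frac{\vert A_n\cap\T(w)\vert}{\vert A_n\vert}\Bigr)^2
\le \max_{w\in\G_j}\frac{\vert A_n\cap\T(w)\vert}{\vert A_n\vert}.
\]
Now $\vert A_n\cap\T(w)\vert$ is controlled by the generation sizes of the subtree $\T(w)$, which (conditionally on $\vert\G_j\vert$ and the non-extinction of each subtree) are i.i.d.\ copies of the unconditioned Bienaymé--Galton--Watson generation sizes. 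Using $\vert A_n\cap\T(w)\vert\le \sum_{i} \vert\G_i^{(w)}\vert$ over the relevant range of generations, dividing by $\vert A_n\vert$ which itself grows like $\sum_i\vert\G_i\vert\asymp m^n W$, and invoking the almost-sure convergence of $m^{-k}\vert\G_k^{(w)}\vert$ to finite limits $W^{(w)}$, one gets
\[
\limsup_{n\to\infty}\frac{\vert A_n\cap\T(w)\vert}{\vert A_n\vert}\le C\,m^{-j}\,\frac{W^{(w)}}{W}
\]
for a constant $C$ depending only on the shape of the window. Taking the max over the finitely many $w\in\G_j$ and using that $\G_j$ is finite and each $W^{(w)}$ is finite a.s., the right-hand side is bounded by $m^{-j}$ times an a.s.-finite random variable, which tends to $0$ as $j\to\infty$. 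This gives, for every $\eps>0$, a $k$ with $\limsup_n\Prb(h(U_n\land V_n)>k\mid\T)<\eps$ almost surely, which is exactly Assumption~\ref{assump:ancestor_tight}.

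Part~\ref{item:GW_Gn_Tn_dist_distrib} then follows by specialization: $\G_n$ is the case $\ell_n=0$ and $\T_n$ is the case $\ell_n=n$ in part~\ref{item:GW_An_dist_distrib}, so no new argument is needed beyond checking that the ``$A_n$ far from the root'' hypothesis still holds for $\T_n$. For $A_n=\T_n$ this is where a small extra point is needed: a uniform vertex of $\T_n$ has height concentrated near $n$ because $\vert\G_k\vert\asymp m^k W$ makes the last generations carry almost all the mass, so $\Prb(h(U_n)\le k\mid\T)\le (\sum_{i\le k}\vert\G_i\vert)/\vert\T_n\vert\to 0$ a.s.\ for every fixed $k$; for $A_n=\G_n$ it is trivial. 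I expect the main obstacle to be the bookkeeping in the subtree decomposition — making rigorous that $\vert A_n\cap\T(w)\vert/\vert A_n\vert$ converges (or is suitably bounded) uniformly over $w\in\G_j$ using the i.i.d.\ structure of subtrees and the Kesten--Stigum $L^2$ convergence, while correctly handling the conditioning on non-extinction (which I would do by decomposing $\T$ into its finitely many extinct subtrees hanging off the infinite ``backbone'' and the surviving ones, noting extinct subtrees contribute a bounded amount and hence vanish after normalization by $\vert A_n\vert\to\infty$).
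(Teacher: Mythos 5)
Your overall skeleton matches the paper's: reduce to Assumption~\ref{assump:ancestor_tight} via Lemma~\ref{lemma:assump_An_Gn_2}, write $\Prb(h(U_n\land V_n)\ge j\mid \T)=\sum_{w\in\G_j}(\vert A_n\cap\T(w)\vert/\vert A_n\vert)^2$, and pass to the Kesten--Stigum limits $W^{(w)}$ of the subtrees at level $j$. But there is a genuine gap at the decisive step. After bounding the sum of squares by $\max_{w\in\G_j}\vert A_n\cap\T(w)\vert/\vert A_n\vert$ and taking $\limsup_n$, you are left with (up to constants) $m^{-j}\max_{w\in\G_j}W^{(w)}/W$, and you conclude it tends to $0$ as $j\to\infty$ because it is ``$m^{-j}$ times an a.s.-finite random variable.'' That is a non sequitur: the random variable $\max_{w\in\G_j}W^{(w)}$ depends on $j$, and the maximum is taken over $\vert\G_j\vert\asymp W m^j$ terms, so a priori it can grow like $m^j$ (indeed $\Esp[\max_{w\in\G_j}W^{(w)}]$ is only bounded by $\Esp[\vert\G_j\vert]\,\Esp[W]\asymp m^j$, which gives nothing). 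To close the argument you must actually prove $\max_{w\in\G_j}W^{(w)}=o(m^j)$ a.s., e.g.\ by Borel--Cantelli: $\Prb(\exists w\in\G_j:\,W^{(w)}>\eps m^j)\le\Esp[\vert\G_j\vert]\,\Prb(W>\eps m^j)\lesssim m^j\cdot m^{-2j}\Esp[W^2]/\eps^2$, summable in $j$ thanks to the finite second moment hypothesis. This is exactly where that hypothesis enters, and your write-up never uses it. (The paper avoids the maximum altogether: it keeps the ratio $\sum_w W_w^2/(\sum_w W_w)^2$ over the $\vert\G_j^s\vert\to\infty$ surviving subtrees and kills it with the strong law of large numbers plus dominated convergence.)

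Two secondary points. First, the claim that the $W^{(w)}$ are i.i.d.\ copies of $W$ is false under the conditioning on non-extinction; you need the survivor/extinct (skeleton) decomposition that the paper imports from Athreya--Ney, and ``finitely many extinct subtrees hanging off the backbone'' is inaccurate --- there are infinitely many, though only the finitely many extinct vertices of the fixed level $\G_j$ matter, and for $\ell_n=n$ their contribution does not vanish identically but only after division by $\vert A_n\vert$ (this is why the paper splits into the bounded-window case and the general case). Second, your interchange of $\limsup_n$ with $\max_{w\in\G_j}$ and the uniformity of the convergence $m^{-(i-j)}\vert\G_i\cap\T(w)\vert\to W^{(w)}$ over the moving window $i\in[(n-\ell_n)_+,n]$ is harmless here because $\G_j$ is finite, but it should be stated. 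If you add the Borel--Cantelli control of the maximum, your route is a legitimate and arguably slightly more elementary alternative to the paper's LLN argument, and it dispenses with the paper's two-case reduction on $(\ell_n)$; as written, however, the key limit $j\to\infty$ is unjustified.
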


In the case of a critical Bienaymé-Galton-Watson tree, it is not possible to condition on non-extinction,
but taking $A_n = \G_n$ and conditioning on non-extinction at time $n$,
\cite[Theorem~2.1]{athreyaCoalescenceCriticalSubcritical2012} suggests that
Assumption~\ref{assump:An_ergodic_theorem} should be satisfied but not Assumption~\ref{assump:ancestor_tight}.

The proof of Lemma~\ref{lemma:GW_Gn_dist_distrib}-\ref{item:GW_An_dist_distrib}
relies on the case where the sequence $(\ell_n)_{n\in\N}$ is bounded.
In this case,
we prove a stronger version of \cite[Theorem~2]{athreyaCoalescenceRecentRapidly2012}
where individuals can have zero child 
and the two random individuals are taken from the last $\ell_n+1$ generations instead of only the last generation.

Before proving Lemma~\ref{lemma:GW_Gn_dist_distrib},
	we need to prove the following lemma stating that the last generations
	carry most of the weight of $\T_n$.

\begin{lemma}[Last generations carry all the weight]
	\label{lemma:last_gens_Tn}
Let $\T$ be a super-critical Bienaymé-Galton-Watson tree whose offspring distribution has mean $m>1$,
	and is conditioned on non-extinction.
We have:
\begin{equation}\label{eq_ratio_Gn_Tn}
\forall \ell\in\N,\qquad
\text{\as}\quad
\lim_{n\to\infty} \frac{\vert \cup_{k=(n-\ell)_+}^n \G_{k} \vert}{\vert \T_n\vert} = 1 - \inv{m^{\ell+1}}
	\cdot
\end{equation}
\end{lemma}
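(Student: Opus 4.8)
The plan is to exploit the classical martingale convergence for super-critical Bienaymé-Galton-Watson trees. Write $Z_n = \vert \G_n \vert$ for the size of the $n$-th generation; then $\vert \T_n \vert = \sum_{k=0}^n Z_k$ and $\vert \cup_{k=(n-\ell)_+}^n \G_k \vert = \sum_{k=(n-\ell)_+}^n Z_k$. The Kesten--Stigum theorem (available because the offspring distribution has finite mean $m>1$, and indeed we are given a finite second moment, so $L\log L$ certainly holds) gives that $W_n := Z_n / m^n$ converges almost surely and in $L^1$ to a nonnegative random variable $W$; moreover, conditionally on non-extinction, $W>0$ almost surely. This positivity is exactly what makes the ratio well-behaved in the limit.

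First I would fix $\ell \in \N$ and, for $n > \ell$, rewrite the ratio by dividing numerator and denominator by $m^n$:
\[
\frac{\sum_{k=n-\ell}^n Z_k}{\sum_{k=0}^n Z_k}
= \frac{\sum_{j=0}^{\ell} m^{-j} W_{n-j}}{\sum_{k=0}^n m^{-(n-k)} W_k}
= \frac{\sum_{j=0}^{\ell} m^{-j} W_{n-j}}{\sum_{i=0}^{n} m^{-i} W_{n-i}}.
\]
On the event of non-extinction, $W_{n-j} \to W$ a.s.\ for each fixed $j$, so the numerator converges a.s.\ to $W \sum_{j=0}^{\ell} m^{-j} = W\,\frac{1 - m^{-(\ell+1)}}{1 - m^{-1}}$. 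For the denominator I would argue that $\sum_{i=0}^{n} m^{-i} W_{n-i} \to W \sum_{i=0}^{\infty} m^{-i} = \frac{W}{1-m^{-1}}$ a.s.; this is a standard Cesàro/dominated-type statement: since $(W_k)_k$ is a.s.\ bounded (being a.s.\ convergent) and $\sum_i m^{-i} < \infty$, one splits the sum at a large fixed index $I$, uses $W_{n-i}\to W$ for $i\le I$, and bounds the tail $\sum_{i>I} m^{-i} \sup_k W_k$ uniformly in $n$. Taking the ratio of the two limits, the factor $(1-m^{-1})^{-1}$ and the (a.s.\ strictly positive, hence cancellable) factor $W$ drop out, leaving $1 - m^{-(\ell+1)}$, which is the claimed limit.

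The only genuine subtlety, and the step I would be most careful about, is justifying the a.s.\ convergence of the denominator series uniformly enough to pass to the limit — i.e.\ controlling the tail of $\sum_i m^{-i} W_{n-i}$ independently of $n$. This is where one needs that $\sup_k W_k < \infty$ a.s., which follows from a.s.\ convergence of $(W_k)$, and the geometric summability of $m^{-i}$; there is no uniform integrability issue since we only claim almost-sure convergence of the ratio. The positivity $W>0$ on non-extinction (Kesten--Stigum) is essential to ensure the limit ratio is the deterministic constant $1 - m^{-(\ell+1)}$ and not $0/0$; I would cite it explicitly. Everything else is bookkeeping with geometric series.
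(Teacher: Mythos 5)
Your overall strategy --- normalize the generation sizes, use almost-sure convergence of the normalized sizes to a strictly positive limit $W$, and handle the sum over all generations by a split-the-sum/Ces\`aro argument before cancelling $W$ --- is exactly the paper's. The one genuine gap is in the normalization you choose. You invoke the Kesten--Stigum theorem with the normalization $Z_n/m^n$ and justify the crucial positivity of $W$ on non-extinction by asserting that a finite second moment ``is given''. It is not: this lemma is stated under the sole assumption that the offspring distribution has finite mean $m>1$ (the finite-second-moment hypothesis appears in Lemma~\ref{lemma:GW_Gn_dist_distrib}, not here). Under a finite mean alone, $Z_n/m^n \to W$ with $W>0$ a.s.\ on non-extinction holds \emph{if and only if} the $L\log L$ condition $\Esp[X\log^+ X]<\infty$ holds; otherwise $W=0$ almost surely, the normalization by $m^n$ is simply wrong, and the ratio you compute degenerates to $0/0$ --- precisely at the step you yourself single out as essential.

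The paper sidesteps this by using the Seneta--Heyde normalization (Theorem~I.C.12.3 of Athreya--Ney): there exist constants $C_n\to\infty$ with $C_{n+1}/C_n\to m$ such that $Z_n/C_n\to W$ a.s.\ with $W>0$ off extinction, under the finite-mean hypothesis only; the sum over generations is then treated via Stolz--Ces\`aro. Your argument is repaired by substituting $C_n$ for $m^n$ throughout; the only extra care needed is that $C_{n-i}/C_n\to m^{-i}$ only pointwise in $i$, so the geometric domination of the tail of the denominator must be replaced by a bound of the form $C_{n-i}/C_n \leq c\,(m-\eps)^{-i}$ for $n$ large, or by the Stolz--Ces\`aro route the paper takes. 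If you only intend to prove the lemma in the finite-second-moment (or $L\log L$) setting in which it is actually applied later, your proof is correct as written, but as a proof of the stated lemma it has a real hole.
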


\begin{proof}
For all $n\in\N$, we write $Z_n = \vert \G_n \vert $.
Using \cite[Theorem~I.C.12.3]{athreyaBranchingProcesses1972},
as $\T$ is a super-critical Bienaymé-Galton-Watson tree whose offspring distribution $\cP$
	has a finite mean $m>1$,
	and is conditioned on non-extinction,
we know that 
	there exists a sequence of positive constants $(C_n)_{n\in\N}$
	with $\lim_{n\to\infty} C_n = \infty$ and $\lim_{n\to\infty} C_{n+1} / C_n = m$,
	and such that
	\as $\lim_{n\to\infty} C_n^{-1} Z_n = W$ 
	where $W$ is a random variable on $\R_+$,
	and where the event $\{ W = 0 \} \supset \{ \exists n,\, Z_n = 0 \}$ has zero probability.
Hence, we get that \as $C_n^{-1} \vert \cup_{k=0}^\ell \G_{n-k} \vert 
= C_n^{-1} \sum_{k=0}^\ell Z_{n-k}$
converges to $W \sum_{k=0}^\ell m^{-k}$ as $n$ goes to infinity.
Writing $S_n = \sum_{k=0}^n C_k$, as we know that $C_n \sim m C_{n-1}$,
we have that $S_{n+1} = S_n + C_{n+1} = m S_n + o(S_n)$,
and thus we get that $C_n \sim C_{n+1} / m \sim \frac{m-1}{m} S_n$.
As $\lim_{n\to\infty} C_n^{-1} \sum_{k=0}^n C_{k} = \frac{m}{m-1}$, 
using Stolz-Cesàro theorem, 
	we get that \as $C_n^{-1} \vert \T_n \vert = C_n^{-1} \sum_{k=0}^n Z_{k}$
	converges to $W \frac{m}{m-1}$ as $n$ goes to infinity.
Consequently, we get \eqref{eq_ratio_Gn_Tn}.
\end{proof}

\begin{proof}[Proof of Lemma~\ref{lemma:GW_Gn_dist_distrib}]
Point~\ref{item:GW_Gn_Tn_dist_distrib} is an immediate consequence of 
Point~\ref{item:GW_An_dist_distrib} taking $\ell_n = 0$ or $n$.

We now prove Point~\ref{item:GW_An_dist_distrib}.
Using Lemma~\ref{lemma:assump_An_Gn_2} with \eqref{eq_ratio_Gn_Tn}, it is enough to prove that
the sequence $(A_n)_{n\in\N}$ \as satisfies Assumption~\ref{assump:ancestor_tight}.
Let $U_n$ and $V_n$ be independent and uniformly sampled elements of $A_n$.
For $k\in\N$, remark that $h(U_n \land V_n) \geq k$ is equivalent to the existence of $u\in\G_k$
	such that $U_n,V_n\in \T(u)$.
Note that Assumption~\ref{assump:ancestor_tight} for $(A_n)_{n\in\N}$ in the random tree $\T$ can be reformulated as:
\begin{align}
\lim_{k\to\infty} \limsup_{n\to\infty}\, \Prb\Bigl( h(U_n \land V_n) \geq k \, \Bigm\vert \,  \T \Bigr) 
& = \lim_{k\to\infty} \limsup_{n\to\infty}\, \Prb\Bigl( \exists u \in \G_{k},\,  U_n,  V_n \in \T(u) \, \Bigm\vert \,  \T \Bigr) 
	\nonumber \\
& = 0 . \label{eq_cond_assump_2}
\end{align}
We divide the rest of the proof into two cases:
we first consider the case when the sequence $(\ell_n)_{n\in\N}$ is bounded,
and then the general case.
\medskip

\textbf{Case 1: the sequence $(\ell_n)_{n\in\N}$ is bounded.}
Following \cite[Section I.D.12]{athreyaBranchingProcesses1972}
(and using the survivor/extinct vertices denomination from \cite[Section 2.2.3]{IntroGWTrees}),
the vertices of the super-critical Bienaymé-Galton-Watson tree $\T$ conditioned on non-extinction 
can be partitioned into two categories:
	\emph{survivor} vertices, whose descendants do not suffer extinction,
	and \emph{extinct} vertices, whose descendants eventually become extinct.
The root of $\T$ is a survivor vertex due to the conditioning on non-extinction.
Denote by $\T^s$ the subtree of $\T$ composed of survivor vertices.
From \cite[Theorem I.D.12.1]{athreyaBranchingProcesses1972}, the tree $\T^s$ is distributed 
	as a super-critical Bienaymé-Galton-Watson tree
	whose offspring distribution $\cP_0$ on $\N^*$ (hence no extinction)
		has the same mean $m>1$ as $\cP$ and has finite second moment.
Also, from \cite[Theorem I.D.12.3]{athreyaBranchingProcesses1972}, we know that if $u\in\T$ is an extinct vertex,
	then its descendant subtree $\T(u)$ is distributed as a sub-critical Bienaymé-Galton-Watson tree
	whose offspring distribution $\cP_1$ is explicitly known
		and has finite second moment.

For all $n> \ell$ and $u\in\T$, define $Z_{n,\ell,u} = \vert (\T_n \setminus \T_{n-\ell-1}) \cap \T(u) \vert$.
Fix some $k\in\N$.
Conditionally on $\T$, the unique ancestor of $U_n$ in $\G_{k}$ is $u\in \G_{k}$
	with probability $Z_{n,\ell_n,u} / \sum_{v\in\G_{k}} Z_{n,\ell_n,v}$.
Define $Z_n = \vert \G_n \vert$, $\G_n^s = \G_n\cap\T^s$
	and $Z_n^s = \vert \G_n^s \vert$.
Then, we have:
\begin{equation*}
\Prb\Bigl( \exists u \in \G_{k},\,  U_n,  V_n \in \T(u) \, \Bigm\vert \,  \T \Bigr) 
= \frac{ \sum_{u\in\G_k} Z_{n,\ell_n,u}^2 }{ \left( \sum_{u\in\G_{k}} Z_{n,\ell_n,u} \right)^2 }	
=  \frac{ \sum_{u\in\G_k^s} Z_{n,\ell_n,u}^2 }{ \left( \sum_{u\in\G_{k}^s} Z_{n,\ell_n,u} \right)^2 }	
, 		\label{eq_upper_bound_proba_Gn_ext}
\end{equation*}
where the last equality holds \as for $n$ large enough
as for any extinct vertex $u\in\G_k\setminus \G_k^s$, we know that \as $Z_{n,\ell_n,u} = 0$ for $n$ large enough
(remind that the sequence $(\ell_n)_{n\in\N}$ is bounded).

From \cite[Theorems I.B.6.1 and I.B.6.2]{athreyaBranchingProcesses1972},
as $\cP$ has finite second moment,
we know that $\lim_{n\to\infty} m^{-n} Z_n = W$ \as and in $L^2$
where $W$ is a random variable in $\R_+^*$ with finite second moment.
(From \cite[Theorem I.B.6.2-(iii)]{athreyaBranchingProcesses1972}, 
	we know that on the non-extinction event, 
	that is when the root is a survivor vertex, \as $W$ is positive.)
Then, remark that for all survivor vertices $u\in\G_k^s$, we have that \as $\lim_{n\to\infty} m^{-(n-k)} Z_{n,0,u} = W_u$,
where conditionally on $\G_k^s$ the random variables $(W_u)_{u\in\G_k^s}$ are independent and distributed as $W$
(remind that the root of $\T$ is also a survivor vertex).
Thus, we get that \as for all $\ell\in\N$ and all $u\in\G_k^s$, 
	$\lim_{n\to\infty} m^{-(n-k)} Z_{n,\ell,u} = (\sum_{j=0}^\ell m^{-j}) W_u$.
As the sequence $(\ell_n)_{n\in\N}$ is bounded, this implies that \as for all vertices $u\in\G_k^s$, 
	$\lim_{n\to\infty} m^{-(n-k)} (\sum_{j=0}^{\ell_n} m^{-j})^{-1}\, Z_{n,\ell_n,u} = W_u$.
Hence, we get:
\begin{equation*}
\text{\as} \qquad \frac{ \sum_{u\in\G_k^s} Z_{n,\ell,u}^2 }{ \left( \sum_{u\in\G_{k}^s} Z_{n,\ell,u} \right)^2 }
\underset{n\to\infty}{\longrightarrow}
\frac{ \sum_{u\in\G_k^s} W_u^2 }{ \left( \sum_{u\in\G_{k}^s} W_u \right)^2 }
\cdot
\end{equation*}
Combining what we got so far, we get:
\begin{equation}\label{eq_lim_proba_GW_ancestor}
\text{\as} \qquad
 \lim_{n\to\infty}\, \Prb\Bigl( h(U_n \land V_n) \geq k \, \Bigm\vert \,  \T \Bigr) 
=   \frac{ \sum_{u\in\G_k^s} W_u^2 }{ \left( \sum_{u\in\G_{k}^s} W_u \right)^2 }
\cdot
\end{equation}
As the left hand side in \eqref{eq_lim_proba_GW_ancestor} is a non-increasing function of $k$,
then so is the right hand side in \eqref{eq_lim_proba_GW_ancestor}.
Thus, taking the limit when $k\to\infty$ and then taking the expectation, we get:
\begin{align}
\Esp\left[ \lim_{k\to\infty} \lim_{n\to\infty}\, \Prb\Bigl( h(U_n \land V_n) \geq k \, \Bigm\vert \,  \T \Bigr)  \right]
& = \Esp\left[ \lim_{k\to\infty}  \frac{ \sum_{u\in\G_k^s} W_u^2 }{ \left( \sum_{u\in\G_{k}^s} W_u \right)^2 }  \right] \nonumber\\
& = \lim_{k\to\infty} \Esp\left[  \frac{ \sum_{u\in\G_k^s} W_u^2 }{ \left( \sum_{u\in\G_{k}^s} W_u \right)^2 }  \right]
, \label{eq_upper_bound_R_Zk}
\end{align}
where we used the dominated convergence theorem in the last inequality.

Let $(W_n)_{n\in\N}$ be a sequence of independent random variables distributed as $W$.
For all $n\in\N$, define the random variable $R(n) = { \sum_{i=1}^n W_i^2 } / { \left( \sum_{i=1}^n W_i \right)^2 }$.
Using the strong law of large numbers, we get that the numerator of $R(n)$ is \as equivalent to $n \Esp[W^2]$,
 and the denominator of $R(n)$ is \as equivalent to $n^2 \Esp[W]^2$.
This implies that $R(n)$ is \as equivalent to $n^{-1} \Esp[W^2] / \Esp[W]^2$, and thus \as $\lim_{n\to\infty} R(n) = 0$.
Remark that the expectation in the last line of \eqref{eq_upper_bound_R_Zk} can be written as $\Esp[ R(Z_k^s) ]$.
As we know that \as $\lim_{k\to\infty} Z_k^s = \infty$, we get that \as $\lim_{k\to\infty} R(Z_k^s) = 0$.
Hence, using the dominated convergence theorem (with domination by $1$),
we get that $\lim_{n\to\infty} \Esp[ R(Z_k^s) ] = 0$,
which implies that the left hand side in \eqref{eq_upper_bound_R_Zk} is also null.
As a consequence, we get that \as \eqref{eq_cond_assump_2} holds,
which implies that \as the sequence $(A_n)_{n\in\N}$ satisfies Assumption~\ref{assump:ancestor_tight}.
This concludes the proof of the first case.
\medskip

\textbf{Case 2: general case.}
Let $\eps>0$, and let $\ell\in\N$ be such that $m^{-\ell-1} < \eps$.
Then, using Lemma~\ref{lemma:last_gens_Tn}, we get that
\as $\Prb( h(U_n) < n-\ell \, \vert \, \T ) < \eps$ for $n$ large enough.
Thus, \as for $n$ large enough, for all $k\in\N$, we have:
\begin{align}
\Prb( h(U_n\land V_n) \geq k \, \vert \, \T )
& \leq \Prb( h(U_n) < n-\ell \text{ or } h(V_n) < n-\ell \, \vert \, \T ) 
	\nonumber\\
& \qquad	+ \Prb( h(U_n) \geq n-\ell \text{ and } h(V_n) \geq n-\ell \, \vert \, \T ) 
	\nonumber\\
& \qquad \qquad \times \Prb( h(U_n\land V_n) \geq k  \,\vert\, \min(h(U_n), h(V_n)) \geq n-\ell,\, \T ) 
	\nonumber\\
& \leq 2 \eps +  \Prb( h(\bar U_n \land \bar V_n) \geq k \, \vert \, \T ),
		 \label{eq_major_proba_Gn_bis}
\end{align}
where $\bar U_n$ and $\bar V_n$ are independent and 
	uniformly distributed over $\cup_{k=(n-{\min(\ell_n, \ell)})_+}^n \G_{k}$.
By the first case, we have that
	$\lim_{k\to\infty} \limsup_{n\to\infty} \Prb( h(\bar U_n \land \bar V_n) \geq k \, \vert \, \T ) = 0$.
Thus, using \eqref{eq_major_proba_Gn_bis}, we get that
$\lim_{k\to\infty} \limsup_{n\to\infty} \Prb( h(U_n\land V_n) \geq k \, \vert \, \T ) \leq 2 \eps$.
This being true for all $\eps > 0$, we get that \as the sequence $(A_n)_{n\in\N}$ 
	satisfies Assumption~\ref{assump:ancestor_tight}, which concludes the proof.
\end{proof}

\section{Dependence of the variance on the shape of the tree}
	\label{section:variance}

In this section, we briefly discuss the variance of the empirical average estimator $\bar M_A(f)$
(which estimates $\scalProd{\mu}{f}$),
that is $\Esp_{\mu}\bigl[ \vert A\vert^{-1} M_A(f)^2\bigr]$ for some Borel function $f$ on $\SpaceX$,
and its dependence on the geometry of the averaging set $A\subset \T$.
We consider the case where the transition kernel $Q$ induces a self-adjoint compact operator on $L^2(\mu)$,
where $\mu$ is the unique invariant measure of $Q$
(note that self-adjoint is equivalent to $(\mu,Q)$ being reversible).
This is in particular the case when the state space $\SpaceX$ is finite and $(\mu,Q)$ is reversible,
or when the operator induced by $Q$ on $L^2(\mu)$ is a symmetric Hilbert-Schmidt operator.
 
We now prove Proposition~\ref{prop:intro:lemma_var_connected_sets},
which is a non-asymptotic result
that states that among subtrees $T\subset \Tinf$ of a given finite size, 
the line graph tree (\ie the Markov chain) is the one minimizing the variance of the empirical average estimator.

\begin{proof}[Proof of Proposition~\ref{prop:intro:lemma_var_connected_sets}]
Let $f\in L^2(\mu)$ be some function.
As $Q$ induces a self-adjoint compact operator on $L^2(\mu)$,
using \cite[Theorem 12.29-(d) and Theorem 12.30]{rudinFunctionalAnalysis1996}
(remind that a self-adjoint operator is normal),
the spectrum of $Q$ is composed of a (at most) countable number of eigenvalues $(\alpha_k)_{k\in\N}$, 
and the function $f\in L^2(\mu)$ has a unique expansion $\sum_{k\in\N} f_k$
where we have $Q f_k = \alpha_k f_k$ for all $k\in\N$, and $\scalProd{f_k}{f_\ell}_{L^2(\mu)} =0$ for $k\neq \ell$.
As $Q$ is a Markov kernel, we have $\alpha_k \in [-1,1]$ for all $k\in\N$
(indeed, using Jensen's inequality, we have $\alpha_k^2 \scalProd{\mu}{f_k^2} = \scalProd{\mu}{(Q f_k)^2} \leq \scalProd{\mu}{Q (f_k^2)} = \scalProd{\mu}{f_k^2}$).
Remark that for all $n,m\in\N$, we have:
\begin{equation}\label{eq_Qn_fi}
\scalProd{\mu}{Q^n f\times Q^m f}
= \sum_{i,j\in\N} \scalProd{Q^n f_i}{Q^m f_j}_{L^2(\mu)}
= \sum_{i\in\N}  \alpha_i^{n+m} \scalProd{\mu}{f_i^2} .
\end{equation}
Using \eqref{eq_Esp_fXu_fXv} with $\nu=\mu$,  
we get that $\Esp_\mu[ f(X_u) f(X_v) ] = \sum_{k\in\N}  \alpha_k^{d(u,v)} \scalProd{\mu}{f_k^2}$.
In particular, we get that
$\Esp_\mu[ M_A(f)^2 ] = \sum_{k\in\N}  \Esp_\mu[ M_A(f_k)^2 ]$,
and thus it is sufficient to prove the result when $f=f_k$ for some $k\in\N$.
(Remark that there exists $k\in\N$ such that $f_k \neq 0$ and $\alpha_k \not\in \{ -1,0,1\}$
if and only if $f \not\in \Ker(Q) \oplus \Ker(Q-I) \oplus \Ker(Q+I)$.) 

Hence, let $k\in\N$ be fixed, 
and in the rest of the proof, we will write $f = f_k$ and $\alpha = \alpha_k\in [-1,1]$.
Thus, the variance of the empirical average estimator can be written as:
\begin{equation*}
 \vert A \vert^{-1}\ \Esp_\mu[ M_A(f)^2 ]
	= \vert A \vert^{-1} \scalProd{\mu}{f^2}\, H_A(\alpha) ,
	\quad\text{with}\quad
H_A(\alpha) = \sum_{u,v \in A} \alpha^{d(u,v)}  , 
\end{equation*}
which involves the Hosoya-Wiener polynomial $H_A(\alpha)$ of the subset $A$.
Note that variance minimization is equivalent to minimization of $H_A(\alpha)$.
Also note that we may consider unrooted trees $A=T$, as the definition of $H_T(\alpha)$ is invariant by rerooting the tree.
Hence, the proof of the proposition is complete by applying
Lemma~\ref{lemma_minimization_Hosoya_polynomial}.
\end{proof}

We now prove Lemma~\ref{lemma_minimization_Hosoya_polynomial} 
which states that the Hosoya-Wiener polynomial
is minimized by the line graph tree among trees of a given size.

%
%

\begin{proof}[Proof of Lemma~\ref{lemma_minimization_Hosoya_polynomial}]
If $\alpha=0$ or $1$, then the value of the Hosoya-Wiener polynomial $H_T$ depends only on the size of $T$,
and thus is the same for every tree $T$ of size $n$.
If $\alpha=-1$, then we have $H_T(-1) = ( \vert B \vert - \vert R \vert )^2$ where $T = B\cup R$ is the bipartite partitioning of vertices in $T$,
that is the value of $H_T(-1)$ is the imbalance between the two bipartite classes of vertices in $T$
($2$-coloring of $T$), and its minimal value is $0$ (resp. $1$) when $n$ is even (resp. odd),
and is achieved by the line graph (but not uniquely for $n\geq 5$, \eg the double-cherry graph in Figure~\ref{fig:double_cherry_graph} is also a minimizer for $n=6$).
We now assume that $\alpha \in (-1,1)\setminus\{0\}$,
and we are going to prove that in this case the line graph is the unique minimizer of $H_T(\alpha)$.
We divide the rest of the proof in two cases depending on the sign of $\alpha$.

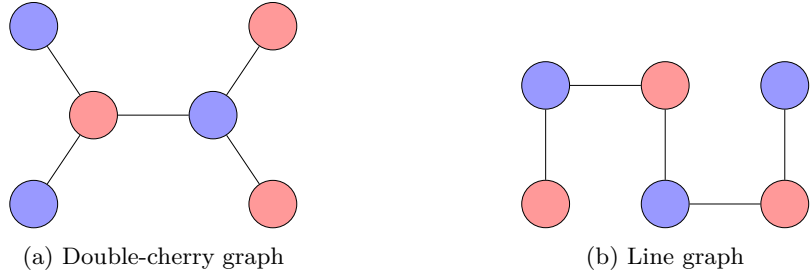
\begin{figure}[t]
     \centering
     \begin{subfigure}[b]{0.45\myFigWidth}
         \centering
\resizebox {0.6\myFigSize} {!} {
\begin{tikzpicture}[
  blueVertex/.style={circle, fill=black!100, draw, minimum size=8mm},
  redVertex/.style={circle, fill=white!40, draw, minimum size=8mm},,
  edge/.style={draw}
]

\node[redVertex] (v) at (0,0) {};
\node[blueVertex] (v1) at (-1,1.5) {};
\node[blueVertex] (v2) at (-1,-1.5) {};
\node[blueVertex] (u) at (2,0) {};
\node[redVertex] (u1) at (3,1.5) {};
\node[redVertex] (u2) at (3,-1.5) {};

\draw[edge] (v) -- (u);
\draw[edge] (u) -- (u1);
\draw[edge] (u) -- (u2);
\draw[edge] (v) -- (v1);
\draw[edge] (v) -- (v2);
\end{tikzpicture}
}

\caption{Double-cherry graph}
\label{fig:double_cherry_graph}
\end{subfigure}
     \begin{subfigure}[b]{0.45\myFigWidth}
         \centering
\resizebox {0.6\myFigSize} {!} {
\begin{tikzpicture}[
  blueVertex/.style={circle, fill=black!100, draw, minimum size=8mm},
  redVertex/.style={circle, fill=white!40, draw, minimum size=8mm},,
  edge/.style={draw}
]

\node[redVertex] (u1) at (0,0) {};
\node[blueVertex] (u2) at (0,2) {};
\node[redVertex] (u3) at (2,2) {};
\node[blueVertex] (u4) at (2,0) {};
\node[redVertex] (u5) at (4,0) {};
\node[blueVertex] (u6) at (4,2) {};

\draw[edge] (u1) -- (u2);
\draw[edge] (u2) -- (u3);
\draw[edge] (u3) -- (u4);
\draw[edge] (u4) -- (u5);
\draw[edge] (u5) -- (u6);
\end{tikzpicture}
}

\caption{Line graph}
\end{subfigure}

\caption{Comparison of the double-cherry graph and the line graph ($n=6$);
both graphs have an exactly balanced bipartite $2$-coloring,
and thus satisfy $H_T(-1)=0$.}
\end{figure}
\medskip

\textbf{Proof for $\alpha\in (0,1)$.}
The case $\alpha\in (0,1)$ was already proved in \cite[Theorem~9]{casablancaDistanceEccentricSequences2019},
but as it is a simple argument, we include it here for the sake of completeness.
Let $n\in\N^*$.
To prove that the line graph minimizes the function $T\mapsto H_T(\alpha)$ among trees of size $n$,
Let $u_1$ be a leaf of $T$, and consider the tree $T$ to be rooted at $u_1$.
Let $\ell\in\N^*$ be the first generation with size larger than $1$
(which exists as $T$ is not the line graph),
and for $i<\ell$, denote by $u_{i+1}$ the only vertex in the $i$-th generation.
Let $v$ be one of the children of the vertex $u_\ell$,
and denote by $T_v$ the connected component of $T\setminus \{ u_\ell \}$ that contains $v$.
We define the tree $T'$ by removing the edge $(u_\ell, v)$ and by adding the edge $(u_1, v)$
(see Figure~\ref{fig:tree_alpha_pos_before},
	where for clarity the other children of $u_\ell$ have been labeled $v_2,\cdots, v_k$).
We now compare the distances $d_{T'}(u,w)$ and $d_{T}(u,w)$ for $u,w\in T$:
	if $u$ and $w$ are both in $T\setminus T_v$ or both in $T_v$,
	then $d_{T'}(u,w) = d_{T}(u,w)$; 
	if $w\in T_v$, then $d_{T'}(u_i, w) = d_{T}(u_{\ell-i+1},w)$ for $1\leq i \leq \ell$;
	and if $u\in T\setminus (T_v \cup \{ u_1, \cdots, u_\ell \})$ and $w\in T_v$,
		then we have: 
\[d_{T'}(u,w) 
	= d_{T'}(u,u_\ell) + d_{T'}(u_\ell,v) +  d_{T'}(w,v) 
	= d_{T}(u,u_\ell) + \ell + d_{T}(w,v)
	>
	d_{T}(u,w) .\]
Hence, we get: 
\begin{equation*}
H_{T}(\alpha) - H_{T'}(\alpha) 
= 2 \sum_{u\in T_v, w\in T\setminus (T_v \cup \{ u_1, \cdots, u_\ell \})} \alpha^{d_T(u,v)} - \alpha^{d_{T'}(u,v)}
> 0,
\end{equation*}
and thus $T$ does not minimize $H_T(\alpha)$.

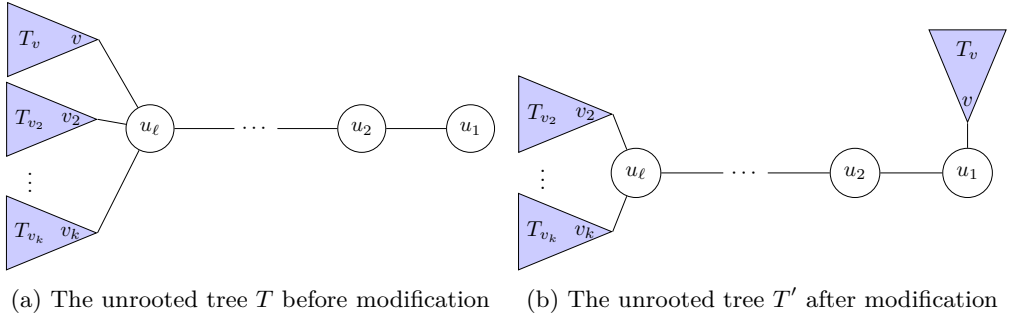
\begin{figure}[t]
     \centering
     \begin{subfigure}[b]{0.45\myFigWidth}
         \centering
\resizebox {1\myFigSize} {!} {
\begin{tikzpicture}[
  vertex/.style={circle, draw, minimum size=8mm},
  tree/.style={isosceles triangle, fill=blue!20, draw, minimum height=1.5cm, minimum width=1cm, anchor=east},
  treesouth/.style={isosceles triangle, fill=blue!20, shape border rotate=-90, draw, minimum height=1.5cm, minimum width=1cm, anchor=south},
  subtree/.style={trapezium, draw, fill=blue!20, shape border rotate=180, trapezium angle=70, minimum height=0.8cm, minimum width=0.4cm, anchor=south},
  edge/.style={draw}
]

\node[vertex] (uell) at (0,0) {$u_\ell$};
\node[, right=1 of uell] (udots) {$\cdots$};
\node[vertex, right=1 of udots] (u2) {$u_2$};
\node[vertex, right=1 of u2] (u1) {$u_1$};

\node[] (vdots) at (-2,-0.8) {$\vdots$};
\node[tree, above=0.1cm of vdots] (v2) {$T_{v_2}$};
\node[tree, above=0.4cm of v2] (v) {$T_{v}$};
\node[tree, below=0.1cm of vdots] (vk) {$T_{v_k}$};

\node[, left=0.15cm of v.apex] (oo_v) {$v$};
\node[, left=0.15cm of v2.apex] (oo_v2) {$v_2$};
\node[, left=0.15cm of vk.apex] (oo_vk) {$v_k$};

\draw[edge] (uell) -- (udots);
\draw[edge] (u2) -- (udots);
\draw[edge] (u2) -- (u1);
\draw[edge] (uell) -- (v2.apex);
\draw[edge] (uell) -- (v.apex);
\draw[edge] (uell) -- (vk.apex);

\end{tikzpicture}
}
\caption{The unrooted tree $T$ before modification}
\label{fig:tree_alpha_pos_after}
\end{subfigure}
     \begin{subfigure}[b]{0.45\myFigWidth}
         \centering
\resizebox {1\myFigSize} {!} {
\begin{tikzpicture}[
  vertex/.style={circle, draw, minimum size=8mm},
  tree/.style={isosceles triangle, fill=blue!20, draw, minimum height=1.5cm, minimum width=1cm, anchor=east},
  treesouth/.style={isosceles triangle, fill=blue!20, shape border rotate=-90, draw, minimum height=1.5cm, minimum width=1cm, anchor=south},
  subtree/.style={trapezium, draw, fill=blue!20, shape border rotate=180, trapezium angle=70, minimum height=0.8cm, minimum width=0.4cm, anchor=south},
  edge/.style={draw}
]

\node[vertex] (uell) at (0,0) {$u_\ell$};
\node[, right=1 of uell] (udots) {$\cdots$};
\node[vertex, right=1 of udots] (u2) {$u_2$};
\node[vertex, right=1 of u2] (u1) {$u_1$};

\node[] (vdots) at (-1.5,0) {$\vdots$};
\node[tree, above=0.1cm of vdots] (v2) {$T_{v_2}$};
\node[treesouth, above=0.4cm of u1] (v) {$T_{v}$};
\node[tree, below=0.1cm of vdots] (vk) {$T_{v_k}$};

\node[, above=0.15cm of v.apex] (oo_v) {$v$};
\node[, left=0.15cm of v2.apex] (oo_v2) {$v_2$};
\node[, left=0.15cm of vk.apex] (oo_vk) {$v_k$};

\draw[edge] (uell) -- (udots);
\draw[edge] (u2) -- (udots);
\draw[edge] (u2) -- (u1);
\draw[edge] (uell) -- (v2.apex);
\draw[edge] (u1) -- (v.apex);
\draw[edge] (uell) -- (vk.apex);

\end{tikzpicture}
}
\caption{The unrooted tree $T'$ after modification}
\label{fig:tree_alpha_pos_before}
\end{subfigure}

\caption{The unrooted trees $T$ and $T'$ for $\alpha\in(0,1)$}
\label{fig:tree_alpha_pos}
\end{figure}
\medskip

\textbf{Proof for $\alpha\in (-1,0)$.}
We prove the statement by recurrence on the size $n$ of the tree.
For $n\in \{1,2,3\}$, there exists only one (unrooted) tree of size $n$, hence the statement is trivial.

Before proving that this property is hereditary, 
first remark that if $T$ is a tree of size $n\geq 2$ and $u,v\in T$ are two vertices connected by an edge,
and we let $T_u$ and $T_v$ be the two rooted subtrees of $T$ obtained by removing the edge $(u,v)$
and rooted at $u$ and $v$ respectively,
then we get:
\begin{align}
H_T(\alpha) 
& = H_{T_u}(\alpha)  + H_{T_v}(\alpha)  + 2 \sum_{u' \in T_u} \sum_{v' \in T_v} \alpha^{d_T(u',v')} \nonumber\\
& = H_{T_u}(\alpha)  + H_{T_v}(\alpha)  + 2\alpha\, C_{T_u}(\alpha) C_{T_v}(\alpha)
	, \label{eq_H_T_rec}
\end{align}
where for convenience we write $C_{T_v}(\alpha) = \left( \sum_{v' \in T_v} \alpha^{d_T(v,v')} \right)$,
and similarly for $T_u$.
We also remark that when $T_v$ is the line graph with $k$ vertices $\{ u_1, \cdots, u_k\}$ rooted at the vertex $v=u_j$, 
then we have $C_{T_v}(\alpha) = \inv{1-\alpha}( 1 + \alpha - \alpha^{j} - \alpha^{k-j+1})$,
which is maximal among rooted copies of $\{ u_1, \cdots, u_k\}$ only for $j=1$ or $k$
(remind that $\alpha\in (-1,0)$).
We denote by $L_k$ the line graph with $k$ vertices rooted at one of its extremities.

Now, consider $n>3$ and assume that for all $k<n$, 
the line graph is the unique minimizer of $T\mapsto H_T(\alpha)$ among trees of size $k$.
Consider an unrooted tree $T$ of size $n$ that is not the line graph.
Let $u$ be a leaf of the tree obtained by removing all the leaves of $T$
(such vertex is sometimes called a non-protected vertex).
In particular, the node $u$ has $2+\ell$ neighbors in $T$ with $\ell\in\N$, which we denote by $u_0, \cdots, u_{\ell+1}$;
and at most one of the neighbors of $u$ is not a leaf, say $v=u_{\ell+1}$.
(All the neighbors of $u$ are leaves if and only if $T$ is the star graph whose center vertex is $u$,
	in which case we still write $v=u_{\ell+1}$.)
Denote by $T_v$ the subtree $T\setminus \{ u, u_0, \cdots, u_\ell \}$ of $T$ rooted at $v$.
We consider three cases.

\begin{figure}[t]
     \centering
     \begin{subfigure}[b]{0.3\myFigWidth}
         \centering
\resizebox {0.8\myFigSize} {!} {
\begin{tikzpicture}[
  vertex/.style={circle, draw, minimum size=8mm},
  subtree/.style={trapezium, draw, fill=blue!20, shape border rotate=270, trapezium angle=60, minimum height=0.8cm, minimum width=0.5cm, anchor=east},
    tree/.style={isosceles triangle, fill=blue!20, draw, minimum height=1.5cm, minimum width=1cm, anchor=east},
  edge/.style={draw}
]

\node[tree] (v) at (0.5,0) {$T_v$};
\node[, left=0.15cm of v.apex] (oo_v) {$v$};
\node[vertex] (u) at (1.5,0) {$u$};
\node[vertex] (u0) at (3,0) {$u_0$};

\draw[edge] (v) -- (u);
\draw[edge] (u) -- (u0);

\end{tikzpicture}
}
\caption{Case 1}
\label{fig:case_1}
     \end{subfigure}
     \hfill
     \begin{subfigure}[b]{0.3\myFigWidth}
    \centering 
\resizebox {0.8\myFigSize} {!} {
\begin{tikzpicture}[
  vertex/.style={circle, draw, minimum size=8mm},
  subtree/.style={trapezium, draw, fill=blue!20, shape border rotate=270, trapezium angle=60, minimum height=0.8cm, minimum width=0.5cm, anchor=east},
  tree/.style={isosceles triangle, fill=blue!20, draw, minimum height=1.5cm, minimum width=1cm, anchor=east},
  edge/.style={draw}
]

\node[tree] (v) at (0.5,0) {$T_v$};
\node[, left=0.15cm of v.apex] (oo_v) {$v$};
\node[vertex] (u) at (1.5,0) {$u$};
\node[vertex] (u0) at (3,0) {$u_0$};
\node[] (dots) at (1.5,-1.5) {$\cdots$};
\node[vertex, left=0.1cm of dots] (u1) {$u_1$};
\node[vertex, right=0.1cm of dots] (ul) {$u_\ell$};

\draw[edge] (v) -- (u);
\draw[edge] (u) -- (u0);
\draw[edge] (u) -- (u1);
\draw[edge] (u) -- (ul);

\end{tikzpicture}
}
\caption{Case 2}
\label{fig:case_2}
     \end{subfigure}
     \hfill
     \begin{subfigure}[b]{0.3\myFigWidth}
    \centering 
\resizebox {0.8\myFigSize} {!} {
\begin{tikzpicture}[
  vertex/.style={circle, draw, minimum size=8mm},
  tree/.style={isosceles triangle, fill=blue!20, draw, minimum height=1.5cm, minimum width=1cm, anchor=east},
  edge/.style={draw}
]

\node[vertex] (v) at (0,0) {$v$};
\node[vertex] (u) at (1.5,0) {$u$};
\node[vertex] (u0) at (3,0) {$u_0$};

\node[] (udots) at (1.5,-1.5) {$\cdots$};
\node[vertex, left=0.1cm of udots] (u1) {$u_1$};
\node[vertex, right=0.1cm of udots] (ul) {$u_\ell$};

\node[] (vdots) at (-1.5,0) {$\vdots$};
\node[tree, above=0.1cm of vdots] (v1) {$T_{v_1}$};
\node[, left=0.15cm of v1.apex] (oo_v1) {$v_1$};
\node[tree, below=0.1cm of vdots] (vk) {$T_{v_k}$};
\node[, left=0.15cm of vk.apex] (oo_vk) {$v_k$};

\draw[edge] (v) -- (u);
\draw[edge] (u) -- (u0);
\draw[edge] (u) -- (u1);
\draw[edge] (u) -- (ul);
\draw[edge] (v) -- (v1.apex);
\draw[edge] (v) -- (vk.apex);

\end{tikzpicture}
}
\caption{Case 3}
\label{fig:case_3}
     \end{subfigure}
        \caption{The unrooted tree $T$ before modification}
\end{figure}
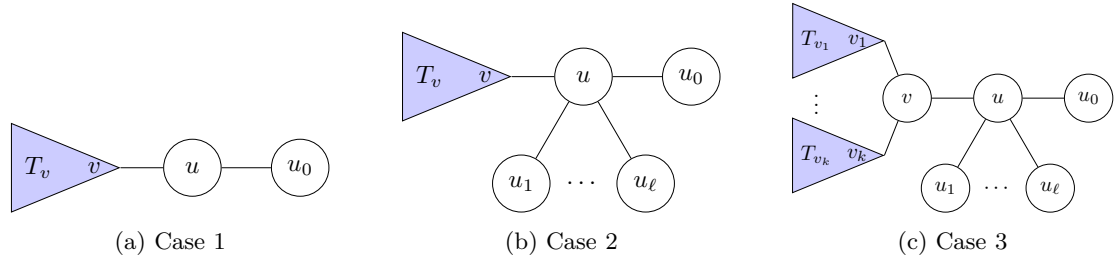

\textbf{Case 1: $\ell=0$ (see Figure~\ref{fig:case_1}).}
As $T$ is not the line graph and as $T_v$ has size $n-2$, by induction hypothesis, 
we either have $H_{T_v}(\alpha) > H_{L_{n-2}}(\alpha)$,	
or $T_v$ is a line graph rooted at a non-extremal vertex (and thus $C_{T_v}(\alpha) < C_{L_{n-2}}(\alpha)$).
If $C_{T_v}(\alpha) < C_{L_{n-2}}(\alpha)$, using \eqref{eq_H_T_rec} around edge $(v,u)$, 
as $\alpha (1+ \alpha) < 0$ (remind that $\alpha\in(-1,0)$), we get:
\begin{align*}
H_T(\alpha) & = H_{T_v}(\alpha) + H_{L_{2}}(\alpha) + 2 \alpha C_{T_v}(\alpha) (1 +\alpha) \\
	& >  H_{L_{n-2}}(\alpha) + H_{L_{2}}(\alpha) + 2 \alpha C_{L_{n-2}}(\alpha) (1 +\alpha) \\
	& = H_{L_n}(\alpha).
\end{align*}
If $C_{T_v}(\alpha) \geq C_{L_{n-2}}(\alpha)$ (thus we have $H_{T_v}(\alpha) > H_{L_{n-2}}(\alpha)$), 
using \eqref{eq_H_T_rec} around edge $(u,u_0)$, 
as $ C_{L_{n-1}}(\alpha) = 1+ \alpha C_{L_{n-2}}(\alpha)$, we get:
\begin{align*}
H_T(\alpha) & = H_{T\setminus\{u_0\}}(\alpha) + H_{L_{1}}(\alpha) + 2 \alpha (1 +\alpha C_{T_v}(\alpha)) \\
	& >  H_{L_{n-1}}(\alpha) + H_{L_{1}}(\alpha) + 2 \alpha (1+ \alpha C_{L_{n-2}}(\alpha)) \\
	& = H_{L_n}(\alpha).
\end{align*}
Thus, for all values of $C_{T_v}(\alpha)$, we get that $H_T(\alpha) > H_{L_n}(\alpha)$.

\textbf{Case 2: $\ell \geq 1$ and $C_{T_v}(\alpha) > 0$.}
Denote by $T_u$ the subtree of $T$ composed of the $\ell+2$ vertices $u,u_0,\cdots, u_\ell$ (see Figure~\ref{fig:case_2}),
and consider the tree $T'$ obtained from $T$ by replacing $T_u$ by a copy of $L_{2+\ell}$.
By induction hypothesis, we know that $H_{T_u}(\alpha) \geq H_{L_{2+\ell}}(\alpha)$.
Using \eqref{eq_H_T_rec} around edge $(v,u)$, 
as $1 +(\ell +1)\alpha < C_{L_{2+\ell}}(\alpha)$,
we get:
\begin{align*}
H_T(\alpha) 
& = H_{T_v}(\alpha) + H_{T_u}(\alpha) + 2 \alpha C_{T_v}(\alpha) (1 +(\ell +1)\alpha) \\
& >  H_{T_v}(\alpha) + H_{L_{2+\ell}}(\alpha) + 2 \alpha C_{T_v}(\alpha) C_{L_{2+\ell}}(\alpha) \\
& = H_{T'}(\alpha) ,
\end{align*}
and thus the tree $T$ does not minimize the Hosoya-Wiener polynomial.

\textbf{Case 3: $\ell \geq 1$ and $C_{T_v}(\alpha) \leq 0$.}
As $C_{T_v}(\alpha) \neq 1$, we know that $v$ is not a leaf.
Denote by $v_1, \cdots, v_k$ the neighbors of $v$ other than $u$,
and for each $i\in \llbracket 1, k\rrbracket$, denote by $T_{v_i}$ the subtree of $T_v$ rooted at $v_i$ (see Figure~\ref{fig:case_3}).
As $C_{T_v}(\alpha) = 1 + \alpha \sum_{i=1}^k C_{T_{v_i}}(\alpha) \leq 0$, 
we have that $\sum_{i=1}^k C_{T_{v_i}}(\alpha) \geq -1/\alpha > 0$,
and thus there exists $i\in \llbracket 1, k\rrbracket$ such that $C_{T_{v_i}}(\alpha) > 0$.
Without loss of generality, we assume for simplicity that $i=1$.

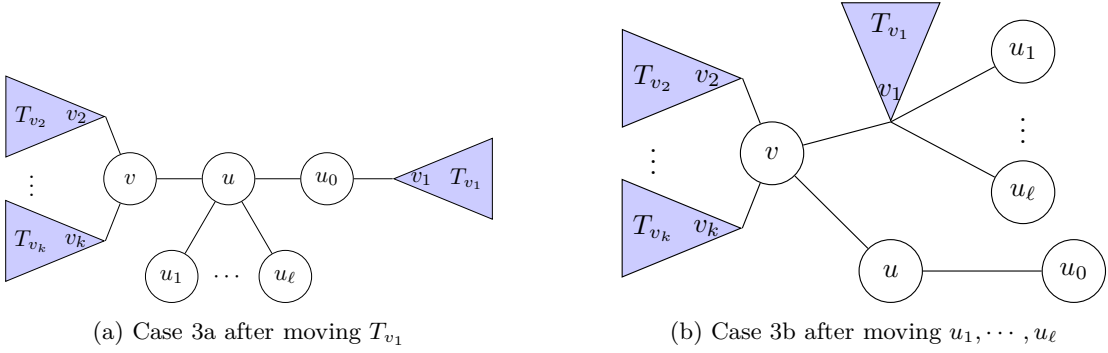
\begin{figure}[t]
     \centering
     \begin{subfigure}[b]{0.45\myFigWidth}
         \centering
\resizebox {1\myFigSize} {!} {
\begin{tikzpicture}[
  vertex/.style={circle, draw, minimum size=8mm},
  tree/.style={isosceles triangle, fill=blue!20, draw, minimum height=1.5cm, minimum width=1cm, anchor=east},
  treeright/.style={isosceles triangle, fill=blue!20, shape border rotate=180, draw, minimum height=1.5cm, minimum width=1cm, anchor=west},
  edge/.style={draw}
]

\node[vertex] (v) at (0,0) {$v$};
\node[vertex] (u) at (1.5,0) {$u$};
\node[vertex] (u0) at (3,0) {$u_0$};

\node[] (udots) at (1.5,-1.5) {$\cdots$};
\node[vertex, left=0.1cm of udots] (u1) {$u_1$};
\node[vertex, right=0.1cm of udots] (ul) {$u_\ell$};

\node[] (vdots) at (-1.5,0) {$\vdots$};
\node[tree, above=0.1cm of vdots] (v2) {$T_{v_2}$};
\node[tree, below=0.1cm of vdots] (vk) {$T_{v_k}$};

\node[treeright] (v1) at (4,0) {$T_{v_1}$};

\node[, right=0.15cm of v1.apex] (oo_v1) {$v_1$};
\node[, left=0.15cm of v2.apex] (oo_v2) {$v_2$};
\node[, left=0.15cm of vk.apex] (oo_vk) {$v_k$};

\draw[edge] (v) -- (u);
\draw[edge] (u) -- (u0);
\draw[edge] (u) -- (u1);
\draw[edge] (u) -- (ul);
\draw[edge] (v) -- (v2.apex);
\draw[edge] (v) -- (vk.apex);
\draw[edge] (u0) -- (v1.apex);

\end{tikzpicture}
}
\caption{Case 3a after moving $T_{v_1}$}
\label{fig:case_3a}
     \end{subfigure}
     \hfill
     \begin{subfigure}[b]{0.45\myFigWidth}
    \centering 
\resizebox {1\myFigSize} {!} {
\begin{tikzpicture}[
  vertex/.style={circle, draw, minimum size=8mm},
  tree/.style={isosceles triangle, fill=blue!20, draw, minimum height=1.5cm, minimum width=1cm, anchor=east},
  treesouth/.style={isosceles triangle, fill=blue!20, shape border rotate=-90, draw, minimum height=1.5cm, minimum width=1cm, anchor=south},
  subtree/.style={trapezium, draw, fill=blue!20, shape border rotate=180, trapezium angle=70, minimum height=0.8cm, minimum width=0.4cm, anchor=south},
  edge/.style={draw}
]

\node[vertex] (v) at (0,0) {$v$};
\node[treesouth] (v1) at (1.5,0.4) {$T_{v_1}$};
\node[vertex] (u) at (1.5,-1.5) {$u$};
\node[vertex, right=1.5 of u] (u0) {$u_0$};

\node[, right=1.5 of v1.apex] (udots) {$\vdots$};
\node[vertex, above=0.1cm of udots] (u1) {$u_1$};
\node[vertex, below=0.1cm of udots] (ul) {$u_\ell$};

\node[] (vdots) at (-1.5,0) {$\vdots$};
\node[tree, above=0.1cm of vdots] (v2) {$T_{v_2}$};
\node[tree, below=0.1cm of vdots] (vk) {$T_{v_k}$};

\node[, above=0.15cm of v1.apex] (oo_v1) {$v_1$};
\node[, left=0.15cm of v2.apex] (oo_v2) {$v_2$};
\node[, left=0.15cm of vk.apex] (oo_vk) {$v_k$};

\draw[edge] (v) -- (v1.apex);
\draw[edge] (u) -- (v);
\draw[edge] (u) -- (u0);
\draw[edge] (v1.apex) -- (u1);
\draw[edge] (v1.apex) -- (ul);
\draw[edge] (v) -- (v2.apex);
\draw[edge] (v) -- (vk.apex);

\end{tikzpicture}
}
\caption{Case 3b after moving $u_1,\cdots, u_\ell$}
\label{fig:case_3b}
     \end{subfigure}
        \caption{The unrooted tree $T'$ in Case 3 after modification}
\end{figure}

\textbf{Case 3a: $\ell \geq 1$ and $C_{T_v}(\alpha) \leq 0$ and $\sum_{i=2}^k C_{T_{v_i}}(\alpha) >0$.}
Consider the tree $T'$ obtained from $T$ by replacing edge $(v_1,v)$ by $(v_1,u_0)$,
that is grafting $T_{v_1}$ on $u_0$ (see Figure~\ref{fig:case_3a}).
Remark that going from $T$ to $T'$, the subtrees $T_{v_1}$ and $T\setminus T_{v_1}$ do not change, 
the distance between $v_1$ and $v_i$ for $i\in \llbracket 2, k\rrbracket$ goes from $2$ to $4$,
and that $v_1$ is still at distance $1$ from a leaf of the star graph formed by vertices $\{ v,u, u_0, \dots, u_\ell\}$.
Thus, as $\alpha\in(-1,0)$, we have:
\begin{equation*}
H_T(\alpha) - H_{T'}(\alpha)
= 2 C_{T_{v_1}}(\alpha) \left(  \sum_{i=2}^k C_{T_{v_i}}(\alpha) \right) (\alpha^2 - \alpha^4) > 0 ,
\end{equation*}
and thus the tree $T$ does not minimize the Hosoya-Wiener polynomial.

\textbf{Case 3b: $\ell \geq 1$ and $C_{T_v}(\alpha) \leq 0$ and $\sum_{i=2}^k C_{T_{v_i}}(\alpha) \leq 0$.}
In particular, we get that $C_{T_{v_1}}(\alpha) \geq -1/\alpha \geq 1$.
Consider the tree $T'$ obtained from $T$ by replacing edge $(u_i,u)$ by $(u_i,v_1)$ for all $i\in \llbracket 1,\ell \rrbracket$,
that is grafting leaves $u_1,\dots, u_\ell$ to $v_1$ (see Figure~\ref{fig:case_3b}).
Remark that going from $T$ to $T'$, the subtree $T\setminus \{ u_1, \cdots, u_\ell \}$ does not change, 
and for for $i,j \in \llbracket 1, \ell \rrbracket$, the distance between $u_i$ and $u_j$ (resp. $v$) does not change,
and the distance between $u_i$ and $v_1$ (resp. $u$) goes from $3$ to $1$ (resp. from $1$ to $3$).
Thus, as $\alpha\in(-1,0)$, we have:
\begin{align*}
H_T(\alpha) - H_{T'}(\alpha)
& = 2 \ell C_{T_{v_1}}(\alpha) (\alpha^3 - \alpha) + 2 \ell (1+\alpha) (\alpha - \alpha^3) \\
& \geq 2 \ell (\alpha^3 -\alpha) + 2 \ell (1+\alpha) (\alpha - \alpha^3) \\
& \geq 2 \ell (\alpha^2 -\alpha^4) > 0, 
\end{align*}
and thus the tree $T$ does not minimize the Hosoya-Wiener polynomial.
\end{proof}

\providecommand{\MRhref}[2]{%
  \href{http://www.ams.org/mathscinet-getitem?mr=#1}{#2}
}
\providecommand{\href}[2]{#2}
\providecommand{\MR}[1]{\href{http://www.ams.org/mathscinet-getitem?mr=#1}{MR #1}}
\providecommand{\ARXIV}[1]{\href{https://arxiv.org/abs/#1}{arXiv:#1}}

\end{document}